\newtheorem{theorem}{Theorem}
\newtheorem{proposition}{Proposition}
\newtheorem{lemma}{Lemma}
\newtheorem{corollary}{Corollary}
\theoremstyle{definition}
\newtheorem{example}{Example}
\theoremstyle{remark}
\newtheorem{remark}{Remark}
\begin{document}

\title[Gabor frames and asymptotic behavior of distributions]{Gabor frames and asymptotic behavior of Schwartz distributions}

\author[S. Kostadinova]{Sanja Kostadinova}
\address{Faculty of Electrical Engineering and Information Technologies\\ Ss. Cyril and
Methodius University\\ Rugjer Boshkovik bb\\ 1000 Skopje, Macedonia} \email{ksanja@feit.ukim.edu.mk}

\author[K. Saneva]{Katerina Saneva}
\address{Faculty of Electrical Engineering and Information Technologies\\ Ss. Cyril and
Methodius University\\ Rugjer Boshkovik bb\\ 1000 Skopje, Macedonia} \email{saneva@feit.ukim.edu.mk}

\author[J. Vindas]{Jasson Vindas}
\thanks{S. Kostadinova and K. Saneva gratefully acknowledge support by Ministry of Education and Science of the Republic of Macedonia, through the grant 16-2087/1.}
\thanks{J. Vindas gratefully acknowledges support by Ghent University, through the BOF-grant 01N01014.}

\address{Department of Mathematics\\ Ghent University\\ Krijgslaan 281 Gebouw
S22\\ 9000 Gent, Belgium}
\email{jvindas@cage.UGent.be}
\subjclass[2010]{ Primary 46F12. Secondary 40E05, 81S30}
\keywords{Asymptotic behavior of generalized functions; S-asymptotic behavior; Gabor frames}

\begin{abstract}We obtain characterizations of asymptotic properties of Schwartz distribution by using Gabor frames. Our characterizations are indeed Tauberian theorems for shift asymptotics (S-asymptotics) in terms of short-time Fourier transforms with respect to windows generating Gabor frames. For it, we show that the Gabor coefficient operator provides (topological) isomorphisms of the spaces of tempered distributions $\mathcal{S}'(\mathbb{R}^d)$ and distributions of exponential type $\mathcal{K}'_{1}(\mathbb{R}^{d})$ onto their images.

\end{abstract}
\maketitle

\section{Introduction}

Asymptotic behavior is an important notion in distribution theory. The subject has been studied by many authors and applications have been developed in diverse areas such as mathematical physics, number theory, and differential equations; see, e.g.,  the monographs \cite{Estrada,m-l,PSV,VDZ}, references therein, and the recent article \cite{y-e}. The theory of asymptotic behavior of generalized functions has also shown to be quite useful in Tauberian theory for several integral transforms \cite{p-sWiener,prof44,VDZ}. In recent years, characterizations of asymptotic properties of distributions via wavelet analysis have been extensively investigated \cite{KPSVRidgelet,KV,PT,Saneva1,S,Vindas22}.

The purpose of this article is to study the so-called S-asymptotic behavior of a distribution through Gabor frames. Note that
Gabor frames have already been used in other works as an effective tool in the description local and microlocal properties of Schwartz distributions \cite{JPTT,RW,SAK}. Our main result is a characterization of the S-asymptotic behavior in terms of the short-time Fourier transform. The authors and Pilipovi\'{c} have recently obtained various Tauberian theorems for short-time Fourier transforms of distributions \cite{KPSV}. We show here in Section \ref{Section Asymptotic behavior of distributions} that those Tauberian theorems
can be considerably improved by discretizing the frequency variable if one employs windows generating a Gabor frame. We also derive a Tauberian theorem for non-decreasing functions and study connections with Wiener-type kernels.

 An important technical tool in the proofs of our Tauberian theorems from Section \ref{Section Asymptotic behavior of distributions} is a characterization of bounded sets of distributions of exponential type and tempered distributions in terms of growth estimates for Gabor frame coefficients, which will be obtained in Section \ref{owet}. It should be mentioned that the convergence of the Gabor frame series of a tempered distribution is a known fact in time-frequency analysis \cite[Chap.~12]{gr01}; however, the Tauberian problem considered in this article requires to establish that the Gabor coefficient operator indeed provides \emph{topological} isomorphisms of the spaces of distributions of exponential type and tempered distributions onto their images. We therefore revisit the connection between Gabor frames and distributions in Section \ref{owet}.

\section{Preliminaries}
\label{preli}
\subsection{Notation}
The translation and modulation operators are denoted as $T_{x}f(\: \cdot \:)=f( \: \cdot \:-x)$ and $M_{\xi}f(\: \cdot \:)=e^{2\pi i \xi
\:\cdot\:}f(\:\cdot\:),$ $ x,\xi\in\mathbb{R}^d. $ The operators  $M_{\xi} T_x$ and $T_x M_{\xi}$ are called time-frequency shifts and we have $M_{\xi} T_x=e^{2\pi i x\cdot\xi}T_x M_{\xi}.$ The notation $\langle f,\varphi\rangle$ means dual pairing between a distribution $f$ and a test function $\varphi$, so that  $(f,\varphi)_{L^{2}}=\langle f,\overline{\varphi}\rangle$ if $f,\varphi\in L^{2}$. All dual spaces in this article are equipped with the strong dual topology. We fix the constants in the Fourier transform as $\widehat {\varphi}(\xi)=\int_{\mathbb{R}^{d}} e^{-2\pi i x\cdot
\xi}\varphi(x)dx.$


\subsection{{Spaces}}

\noindent Besides the standard Schwartz spaces of rapidly decreasing smooth test functions ${\mathcal S}(\mathbb{R}^d)$ and tempered distributions ${\mathcal S}'(\mathbb {R}^d)$, we will also work with the Hasumi-Silva test function space  $\mathcal{K}_{1}(\mathbb{R}^d)$ and its dual  \cite{hasumi,hoskins-pinto}.
The space ${\mathcal K}_1(\mathbb{R}^d )$ consists of exponentially rapidly decreasing smooth functions, that is,  $\varphi\in{\mathcal K}_1(\mathbb{R}^d )$ if $\varphi\in C^{\infty}(\mathbb{R}^{d})$ and
\begin{equation}
 \label{eqseminorm1}
 \|\varphi\|_{p}:=\sup_{x\in {\mathbb{R}^d }, \ |j|\leq p}e^{p|x|}|\varphi^{(j)}(x)| <\infty,\ \ \forall p\in{\mathbb N}_0.
 \end{equation} The Fr\'{e}chet space topology of ${\mathcal K}_1(\mathbb{R}^d)$ is generated by the family of norms (\ref{eqseminorm1}).  Its dual space  ${\mathcal K}'_1(\mathbb{R}^{d})$ consists of all distributions $f$ of exponential type, i.e., those of the form $f=\sum_{|j|\leq l}(e^{s |\:
\cdot\: |}f_j)^{(j)}$, where $f_j\in L^{\infty}(\mathbb{R}^d)$. Note that if $\varphi\in \mathcal{K}_{1}(\mathbb{R}^d)$, then its Fourier transform $\widehat{\varphi}$ extends to an entire function on $\mathbb{C}^d$.

We shall also need two Fr\'{e}chet spaces of double (two-sided) rapidly decreasing sequences, namely,
$$
\mathcal{S}(\mathbb{Z}^{2d})=\{ \{c_{k,n}\}\in \mathbb{C}^{\mathbb{Z}^{d}\times\mathbb{Z}^{d}}:\:\sup_{(k,n)\in\mathbb{Z}^{2d}}\left|c_{k,n}\right|(1+|k|+|n|)^{p}<\infty\ , \forall p\in\mathbb{N}_{0}\}
$$
and

$$
\mathcal{S}_{\exp,\text{pol}}(\mathbb{Z}^{2d})=\{ \{c_{k,n}\}\in \mathbb{C}^{\mathbb{Z}^{d}\times\mathbb{Z}^{d}}:\:\sup_{(k,n)\in\mathbb{Z}^{2d}}\left|c_{k,n}\right|e^{p|k|}(1+|n|)^{p}<\infty\ , \forall p\in\mathbb{N}_{0}\}.
$$
Clearly, they are (FS)-spaces (Fr\'{e}chet-Schwartz spaces) and their duals are Silva spaces ((DFS)-spaces),
$$
\mathcal{S}'(\mathbb{Z}^{2d})=\{ \{c_{k,n}\}\in \mathbb{C}^{\mathbb{Z}^{d}\times\mathbb{Z}^{d}}:\:\sup_{(k,n)\in\mathbb{Z}^{2d}}\left|c_{k,n}\right|(1+|k|+|n|)^{-p}<\infty\ , \mbox{ for some }  p\in\mathbb{N}_{0}\}
$$
and
$$
\mathcal{S}'_{\exp,\text{pol}}(\mathbb{Z}^{2d})=\{ \{c_{k,n}\}\in \mathbb{C}^{\mathbb{Z}^{d}\times\mathbb{Z}^{d}}:\:\sup_{(k,n)\in\mathbb{Z}^{2d}}\left|c_{k,n}\right|e^{-p|k|}(1+|n|)^{-p}<\infty\ , \mbox{ for some }  p\in\mathbb{N}_{0}\}.
$$


\subsection{Short-time Fourier transform}\label{STFT} The \textit{short-time Fourier transform} (STFT)  of a function $f$ with respect to a
window function $\psi$ is defined as
 \begin{equation}
\label{STFT}
V_{\psi} f(x,\xi )=\langle f, \overline{M_{\xi } T_{x} \psi}\rangle =\int _{\mathbb{R}^{d}}
f(t)\overline{\psi(t-x)}e^{-2\pi i\xi \cdot t} \ dt,\quad x,\xi \in {\mathbb{R}^d}.
\end{equation}
The integral in (\ref{STFT}) is well defined if $f\psi \in L^{1}(\mathbb{R}^d)$, while the dual pairing definition applies whenever $f\in \mathcal{A}'(\mathbb{R}^d)$ and $\psi\in \mathcal{A}(\mathbb{R}^d)$, where $\mathcal{A}(\mathbb{R}^d)$ is a time-frequency shift invariant topological vector spaces of functions. Furthermore, if the action of the time-frequency shifts is continuous on $\mathcal{A}(\mathbb{R}^d)$,  then $V_{\psi}f\in C(\mathbb{R}^{2d})$. See \cite{gr01} for the analysis of the STFT for the cases $\mathcal{A}=L^{2}$ and $\mathcal{S}$;  we refer to \cite{KPSV} for a STFT theory based on the spaces $\mathcal{K}'_{1}$ and  $\mathcal{K}_{1}$. We also mention the interesting paper \cite{b-o}, where window functions are even allowed to be distributions.

We shall need some mapping properties of the STFT.  The authors have shown in \cite[Prop.~3.1 \& Lemma~3.2]{KPSV} the following results:
\begin{equation}
\label{boundSTFTK} f,\psi \in \mathcal{K}_{1}(\mathbb{R}^d) \  \Rightarrow (\forall p) \ \  |V_{\psi}f(x,\xi)|\leq C_{p}e^{-p|x|}(1+|\xi|)^{-p}, \
\forall x, \xi\in\mathbb {R}^{d}, \ \mbox{ and}
\end{equation}
\begin{equation}
\label{boundSTFTK'} f\in\mathcal{K}_{1}' (\mathbb{R}^d) \mbox{ and } \psi \in\mathcal{K}_{1}(\mathbb{R}^d) \ \Rightarrow (\exists p) \ \   |V_{\psi}f(x,\xi)|\leq C_{p}e^{p|x|}(1+|\xi|)^{p},\   \forall x, \xi\in\mathbb {R}^{d}.
\end{equation}
For the Schwartz spaces one has \cite[p. 228]{gr01}:
\begin{equation}
\label{boundSTFTS} f,\psi \in \mathcal{S}(\mathbb{R}^d) \  \Rightarrow (\forall p) \ \  |V_{\psi}f(x,\xi)|\leq C_{p}(1+|x|+|\xi|)^{-p}, \
\forall x, \xi\in\mathbb {R}^d, \ \mbox{ and }
\end{equation}
\begin{equation}
\label{boundSTFTS'} f\in\mathcal{S}' (\mathbb{R}^d) \mbox{ and } \psi \in\mathcal{S}(\mathbb{R}^d) \ \Rightarrow (\exists p) \ \ |V_{\psi}f(x,\xi)|\leq C_{p}(1+|x|+|\xi|)^{p},\   \forall x, \xi\in\mathbb {R}^d.
\end{equation}
It is important to point out that all these estimates can be shown to hold uniformly when $f$ runs over a bounded set of the corresponding space (see the proof of Lemma~\ref{lemma coeff op} below).
\subsection{Gabor frames}\label{Gabor frame} We briefly discuss in this subsection some notions from the theory of Gabor frames, see \cite[Chaps.~5--8]{gr01} for a complete account on the subject. Given a non-zero window function $\psi\in L^{2}(\mathbb{R}^d)$ and lattice parameters $\alpha, \beta>0$, the set of
time-frequency shifts \begin{equation}\label{GF} G(\psi, \alpha, \beta)=\{M_{\beta n}T_{\alpha k}\psi: k, n\in\mathbb{Z}^{d}\} \end{equation} is called a Gabor
frame for $L^2(\mathbb{R}^{d})$ if there exist $A, B>0$ (frame bounds) such that
 \begin{equation}\label{frame1}
A\|f\|^2_{L^2(\mathbb{R}^{d})}\leq \sum_{k,n\in \mathbb{Z}^{d}}|V_{\psi}f({\alpha k},
{\beta n})|^2\leq B\|f\|^2_{L^2(\mathbb{R}^{d})}, \quad f\in L^2(\mathbb{R}^{d}).
 \end{equation}
The Gabor frame
 operator
\begin{equation*}
Sf=S_{\psi,\psi}f=\sum_{k, n\in\mathbb{Z}^{d}}V_{\psi}f({\alpha k},
{\beta n})M_{\beta n}T_{\alpha k} \psi
\end{equation*}
is then bounded, positive, and invertible on $L^{2}(\mathbb{R}^{d})$. The (canonical) dual frame of $G(\psi, \alpha, \beta)$ is the Gabor frame $G(\gamma, \alpha, \beta)$, where the \emph{canonical} dual window is given by
$\gamma=S^{-1}\psi\in L^{2}(\mathbb{R}^{d})$. Every $f\in L^{2}(\mathbb{R}^{d})$ then possesses  the Gabor frame series expansions

\begin{equation}
\label{GFf}
 f=\sum_{k,n\in\mathbb{Z}^{d}}V_{\psi}f({\alpha k},
{\beta n})M_{\beta n}T_{\alpha k} \gamma= \sum_{k,n\in\mathbb{Z}^{d}}V_{\gamma}f({\alpha k},
{\beta n})M_{\beta n}T_{\alpha k}\psi
\end{equation}
with unconditional convergence in $L^{2}(\mathbb{R}^{d})$. The expression (\ref{GFf}) provides an explicit reconstruction of $f$ from the samples of its STFT on the separable lattice $\alpha \mathbb{Z}^{d}\times \beta \mathbb{Z}^{d}$.

One can show that if $G(\psi,\alpha,\beta)$ is a frame for $L^{2}(\mathbb R^{d})$ then $\alpha\beta\leq1$, but, in general, the condition $\alpha\beta\leq1$ does not guarantee that $G(\psi,\alpha,\beta)$ is a frame, see  \cite[Chap.~7]{gr01} for a discussion on this problem. Furthermore, the Balian-Low theorem \cite[Thm.~8.4.1, p.~163]{gr01} implies that if the window $\psi\in\mathcal{S}(\mathbb{R}^{d})$ (in particular if $\psi\in\mathcal{K}_{1}(\mathbb{R}^{d})$), then the stronger condition $\alpha \beta<1$ on the lattice parameters is necessary for $G(\psi,\alpha,\beta)$ to be a Gabor frame. The special case when $\psi(x)=e^{-\pi x^2}$ is the Gaussian is completely understood: $G(\psi,\alpha,\beta)$ is a Gabor frame for $L^{2}(\mathbb{R})$ if and only if $\alpha\beta<1$.

\section{Topological characterization of $\mathcal{K}'_1(\mathbb{R}^{d})$ and $\mathcal{S}'(\mathbb{R}^{d})$ via Gabor frame coefficients}
\label{owet}
In this section we show that topological concepts (such as boundedness and convergence) on $\mathcal{K}'_1(\mathbb{R}^{d})$ and $\mathcal{S}'(\mathbb{R}^{d})$ can be characterized through growth estimates of Gabor frame coefficients. For it, we need to study some properties of the so-called Gabor frame coefficient and synthesis operators. Let $\psi$ be a window (not necessarily generating a Gabor frame, unless explicitly stated). If $\psi\in \mathcal{K}_{1}(\mathbb{R}^{d})$
($\psi\in \mathcal{S}(\mathbb{R}^{d})$, resp.), we set, for ease of writing,
\begin{equation}
\label{Gabor coefficients}
c_{k,n}^{\psi}(f):=V_{\psi}f(\alpha k, \beta n)= \left\langle f,\overline {M_{\beta
n}T_{\alpha k} \psi}\right\rangle,
 \end{equation}
where $f\in
\mathcal K'_1(\mathbb{R}^{d})$ ($\mathcal{S}'(\mathbb{R}^{d})$, resp.). The Gabor coefficient and Gabor synthesis operators are formally defined as
\begin{equation}
\label{Gabor coefficient op}
C_{\psi}(f)=\{c_{k,n}^{\psi}(f)\}_{(k,n)\in\mathbb{Z}^{2d}}
 \end{equation}
 and
\begin{equation}
\label{Gabor synthesis op}
D_{\psi}(\{c_{k,n}\})=\sum_{(k,n)\in \mathbb{Z}^{2d}} c_{k,n} M_{\beta n}T_{\alpha k} \psi,
 \end{equation}
respectively. We begin by establishing the continuity of the Gabor coefficient operator on the spaces that we are concerned with.

\begin{lemma}\label{lemma coeff op} The Gabor coefficient operator $(\ref{Gabor coefficient op})$ is continuous in the following four cases:
\begin{equation}
\label{coeff op mappings 1}
C_{\psi}: \mathcal{K}_{1}(\mathbb{R}^{d})\to \mathcal{S}_{\exp,\textnormal{pol}}(\mathbb{Z}^{2d}), \quad  \ C_{\psi}: \mathcal{K}_1'(\mathbb{R}^{d})\to \mathcal{S}'_{\exp,\textnormal{pol}}(\mathbb{Z}^{2d}) \quad \quad (\psi\in \mathcal{K}_{1}(\mathbb{R}^{d})),
\end{equation}
\begin{equation}
\label{coeff op mappings 2}
C_{\psi}: \mathcal{S}(\mathbb{R}^{d})\to \mathcal{S}(\mathbb{Z}^{2d}), \quad \mbox{ and } \ C_{\psi}: \mathcal{S}'(\mathbb{R}^{d})\to \mathcal{S}'(\mathbb{Z}^{2d}) \quad \quad  (\psi\in \mathcal{S}(\mathbb{R}^{d})).
\end{equation}
\end{lemma}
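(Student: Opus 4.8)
The plan is to handle all four mappings by substituting the lattice points $(x,\xi)=(\alpha k,\beta n)$ into the STFT estimates (\ref{boundSTFTK})--(\ref{boundSTFTS'}) and reading off membership in the appropriate sequence space; the substantive point is continuity, not mere well-definedness. I would organize the continuity argument around the observation that each of the four domains is bornological: $\mathcal{K}_{1}(\mathbb{R}^{d})$ and $\mathcal{S}(\mathbb{R}^{d})$ are Fr\'echet, while $\mathcal{K}_{1}'(\mathbb{R}^{d})$ and $\mathcal{S}'(\mathbb{R}^{d})$ are strong duals of Fr\'echet--Schwartz (hence Montel) spaces, so they are (DFS)-spaces and therefore bornological. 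Consequently a linear operator out of any of them is continuous as soon as it maps bounded sets to bounded sets, and continuity of $C_{\psi}$ reduces to showing that the STFT estimates hold \emph{uniformly} as $f$ ranges over a bounded set — precisely the uniformity announced just before the statement.

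For the two test-function mappings I would argue directly at the level of seminorms, which is cleanest and already yields continuity. Revisiting the derivation of (\ref{boundSTFTK}): since $V_{\psi}f(x,\cdot)$ is the Fourier transform in $t$ of $f\cdot\overline{T_{x}\psi}$, integration by parts converts the factor $(1+|\xi|)^{p}$ into $t$-derivatives of $f\cdot\overline{T_{x}\psi}$; applying Leibniz, bounding $|\partial^{\beta}f(t)|\le \|f\|_{q}e^{-q|t|}$ and $|\partial^{\gamma}\psi(t-x)|\le\|\psi\|_{q}e^{-q|t-x|}$, and using $|t-x|\ge|x|-|t|$ to split off the $x$-decay from an integrable $t$-factor, one sees that the constant in (\ref{boundSTFTK}) can be taken of the form $A_{p}\|f\|_{m(p)}$, with $A_{p}$ depending only on $\psi$ and $p$ and $m(p)$ a fixed seminorm index. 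Then for a given target seminorm $\sup_{k,n}|c_{k,n}^{\psi}(f)|\,e^{q|k|}(1+|n|)^{q}$ one chooses $p$ large in terms of $q,\alpha,\beta$ so that $\sup_{k,n}e^{(q-p\alpha)|k|}(1+|n|)^{q}(1+\beta|n|)^{-p}<\infty$, giving the bound by $\|f\|_{m(p)}$ and hence continuity of $C_{\psi}\colon\mathcal{K}_{1}\to\mathcal{S}_{\exp,\mathrm{pol}}$. The Schwartz case $C_{\psi}\colon\mathcal{S}\to\mathcal{S}(\mathbb{Z}^{2d})$ is identical using (\ref{boundSTFTS}).

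For the two dual mappings I would use the bornological reduction together with Banach--Steinhaus. Let $B$ be a bounded subset of $\mathcal{K}_{1}'(\mathbb{R}^{d})$. Since $\mathcal{K}_{1}(\mathbb{R}^{d})$ is barrelled, $B$ is equicontinuous, so there exist $m$ and $M$ with $|\langle f,\varphi\rangle|\le M\|\varphi\|_{m}$ for all $f\in B$ and $\varphi\in\mathcal{K}_{1}(\mathbb{R}^{d})$. Applying this to $\varphi=\overline{M_{\beta n}T_{\alpha k}\psi}$ and computing $\|M_{\beta n}T_{\alpha k}\psi\|_{m}$ via Leibniz — the modulation contributes a factor $(1+|\beta n|)^{m}$ and the translation, combined with $e^{m|t|}|\psi^{(\cdot)}(t-\alpha k)|\le\|\psi\|_{m}e^{m|t|-m|t-\alpha k|}\le\|\psi\|_{m}e^{m\alpha|k|}$, contributes $e^{m\alpha|k|}$ — yields $|c_{k,n}^{\psi}(f)|\le M\,C_{m}\|\psi\|_{m}\,e^{m\alpha|k|}(1+\beta|n|)^{m}$ with a \emph{single} exponent $p\approx m$ valid for every $f\in B$. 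Thus $C_{\psi}(B)$ is bounded in $\mathcal{S}'_{\exp,\mathrm{pol}}(\mathbb{Z}^{2d})$, so $C_{\psi}$ is a bounded operator and, by bornologicity of $\mathcal{K}_{1}'(\mathbb{R}^{d})$, continuous. The case $C_{\psi}\colon\mathcal{S}'\to\mathcal{S}'(\mathbb{Z}^{2d})$ is the same, estimating the Schwartz seminorms of $M_{\beta n}T_{\alpha k}\psi$ to get a polynomial bound $(1+|k|+|n|)^{p}$ as in (\ref{boundSTFTS'}).

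I expect the dual cases to be the main obstacle. Their estimates (\ref{boundSTFTK'}) and (\ref{boundSTFTS'}) carry an existential quantifier on $p$, so for continuity one must produce a \emph{single} exponent and constant valid uniformly over the whole bounded set; this is exactly where equicontinuity via Banach--Steinhaus is indispensable, and where the final upgrade from ``bounded'' to ``continuous'' relies on the (DFS)-structure of the domains. The seminorm computations for time-frequency shifts of the window are routine but do require the decoupling inequality $|t-x|\ge|x|-|t|$ to separate the $x$-growth (or $x$-decay) from an integrable or bounded $t$-contribution.
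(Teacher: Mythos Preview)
Your proposal is correct and follows essentially the same approach as the paper: reduce continuity to boundedness via bornologicity of all four domains, obtain the test-function cases by tracking constants in the STFT seminorm estimates, and handle the dual cases by invoking equicontinuity of bounded sets (barrelledness of the predual) to estimate $\|M_{\beta n}T_{\alpha k}\psi\|$ with a single exponent uniform over the set. The only cosmetic differences are that the paper cites \cite{KPSV} for the $\mathcal{K}_{1}$ and $\mathcal{K}'_{1}$ cases rather than redoing them, and writes the integration-by-parts step for $\mathcal{S}$ via $(1-\Delta_{t})^{p/2}$ rather than generic derivatives.
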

\begin{proof}
 Since the four spaces $\mathcal{K}_{1}(\mathbb{R}^{d})$, $\mathcal{S}(\mathbb{R}^{d})$, $\mathcal{K}'_1(\mathbb{R}^{d})$, and $\mathcal{S}'(\mathbb{R}^{d})$ are bornological, the continuity of these mappings is an immediate consequence of uniform versions of the estimates (\ref{boundSTFTK}), (\ref{boundSTFTK'}), (\ref{boundSTFTS}),  and (\ref{boundSTFTS'}) over bounded sets of the corresponding spaces. Such uniform versions are explicitly proved in \cite[Prop. 3.1 \& Lemma 3.2]{KPSV} for the spaces $\mathcal{K}_{1}(\mathbb{R}^{d})$ and $\mathcal{K}'_1(\mathbb{R}^{d})$. The proofs for the cases $\mathcal{S}(\mathbb{R}^{d})$ and $\mathcal{S}'(\mathbb{R}^{d})$ are straightforward variants, but we include them for the sake of completeness. Consider the family of norms
\[\|\varphi\|'_{p}:= \sup_{x\in {\mathbb{R}^d }, \ |j|\leq p}|\varphi^{(j)}(x)| (1+|x|)^{p}, \quad \varphi\in\mathcal{S}(\mathbb{R}^{d}).
\]
Fix $p\in\mathbb{N}_{0}$ even. Then, for all $x,\xi\in\mathbb{R}^{d}$,
\begin{align*}
(1+|x|+|\xi|)^{p}|V_{\psi}\varphi(x,\xi)| 
&\leq C_{p} (1+|x|)^{p}\int_{\mathbb{R}^{d}}\left| (1-\Delta_{t})^{p/2}(\varphi(t)\overline{\psi}(t-x)) \right|dt
\\
&
\leq \tilde{C}_{p}\sum_{|j_{1}|+|j_{2}|\leq p}(1+|x|)^{p} \int_{\mathbb R^d} \left|\varphi^{(j_{1})}(t)\psi^{(j_2)}(t-x)\right| dt
\\
&
\leq 
\tilde{C}_{p}\|\varphi\|'_{p} \sum_{|j_{1}|+|j_{2}|\leq p} \int_{\mathbb R^d} (1+|u|)^{p}\left|\psi^{(j_2)}(u)\right| du
\\
&
\leq 
C'_{p} \|\varphi\|'_{p}\|\psi\|'_{p+n+1},
\end{align*}
which proves the assertion that (\ref{boundSTFTS}) holds uniformly on bounded subsets of $\mathcal{S}(\mathbb{R}^{d})$. Taking $x=\alpha k$ and $\xi=\beta n$, we obtain that $C_{\psi}:\mathcal{S}(\mathbb{R}^{d})\to \mathcal{S}(\mathbb{Z}^{2d})$ is a bounded mapping. Next, let $\mathfrak{B}\subset \mathcal{S}'(\mathbb{R}^{d})$ be bounded. This means that $\mathfrak{B}$ is equicontinuous, that is, there are $p\in\mathbb{N}_{0}$ and $C>0$ such that
\[
\sup_{f\in \mathfrak{B}} |\langle f,\varphi \rangle| \leq C \|\varphi\|'_{p}, \quad \mbox{for all } \varphi\in\mathcal{S}(\mathbb{R}^{d}).
\]
Thus, for all $f\in\mathfrak{B}$,
\begin{align*}|V_{\psi}f(x,\xi)| &\leq C \|M_{\xi} T_{x}\psi\|'_{p}
\\
&
\leq C \|\psi\|'_{p} \sum_{j\leq p}\binom{p}{j} |2\pi\xi|^{|p-j|}(1+|x|)^{p}\\
&
= C_{p} (1+|x|+|\xi|)^{2p} \|\psi\|'_{p},
\end{align*}
which gives that $C_{\psi}: \mathcal{S}'(\mathbb{R}^{d})\to \mathcal{S}'(\mathbb{Z}^{2d})$ is bounded upon setting again $x=\alpha k$ and $\xi=\beta n$.
 \end{proof}
 
We now prove the following continuity result for the Gabor synthesis operator.
\begin{proposition}
\label{proposition synthesis op}
The Gabor synthesis operator $(\ref{Gabor synthesis op})$ is continuous in the ensuing four cases:
\begin{equation*}
D_{\psi}: \mathcal{S}_{\exp,\textnormal{pol}}(\mathbb{Z}^{2d}) \to\mathcal{K}_{1}(\mathbb{R}^{d}) , \quad  \ D_{\psi}: \mathcal{S}'_{\exp,\textnormal{pol}}(\mathbb{Z}^{2d})\to\mathcal{K}_1'(\mathbb{R}^{d}) \quad \quad (\psi\in \mathcal{K}_{1}(\mathbb{R}^{d})),
\end{equation*}
\begin{equation*}
D_{\psi}: \mathcal{S}(\mathbb{Z}^{2d})\to \mathcal{S}(\mathbb{R}^{d}), \quad \mbox{ and } \ D_{\psi}: \mathcal{S}'(\mathbb{Z}^{2d}) \to \mathcal{S}'(\mathbb{R}^{d}) \quad \quad  (\psi\in \mathcal{S}(\mathbb{R}^{d})).
\end{equation*}
Furthermore, the series in $(\ref{Gabor synthesis op})$ is unconditionally convergent in the corresponding space.
\end{proposition}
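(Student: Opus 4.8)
The plan is to treat the two test-function cases directly, by absolute convergence estimates, and to deduce the two distributional cases by transposition from Lemma~\ref{lemma coeff op}, keeping a separate argument in reserve for the unconditional convergence in the distributional cases.

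First I would record the basic seminorm estimates for time-frequency shifts of the window, with respect to the norms $\|\cdot\|_{p}$ from (\ref{eqseminorm1}) and $\|\cdot\|'_{p}$ from the proof of Lemma~\ref{lemma coeff op}. Writing $u=x-\alpha k$, using $1+|x|\le (1+|u|)(1+\alpha|k|)$ together with the Leibniz rule applied to $\partial^{j}(e^{2\pi i \beta n\cdot x}\psi(x-\alpha k))$, one obtains constants $C_{p}$ (depending also on $\alpha,\beta,d$) with
\[
\|M_{\beta n}T_{\alpha k}\psi\|'_{p}\le C_{p}(1+|k|)^{p}(1+|n|)^{p}\|\psi\|'_{p}\qquad(\psi\in\mathcal{S}(\mathbb{R}^{d})),
\]
and, using instead $e^{p|x|}\le e^{p|u|}e^{p\alpha|k|}$,
\[
\|M_{\beta n}T_{\alpha k}\psi\|_{p}\le C_{p}(1+|n|)^{p}e^{p\alpha|k|}\|\psi\|_{p}\qquad(\psi\in\mathcal{K}_{1}(\mathbb{R}^{d})).
\]
For $\{c_{k,n}\}\in\mathcal{S}(\mathbb{Z}^{2d})$ the defining estimate dominates $|c_{k,n}|$ by $(1+|k|+|n|)^{-q}$ times a seminorm for every $q$; choosing $q>2p+2d$ makes $\sum_{k,n}|c_{k,n}|\,\|M_{\beta n}T_{\alpha k}\psi\|'_{p}$ converge and bounds it by a constant multiple of a seminorm of $\{c_{k,n}\}$. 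This simultaneously shows that the series defining $D_{\psi}$ converges absolutely in each norm $\|\cdot\|'_{p}$, hence unconditionally in the Fr\'echet space $\mathcal{S}(\mathbb{R}^{d})$, and that $D_{\psi}\colon\mathcal{S}(\mathbb{Z}^{2d})\to\mathcal{S}(\mathbb{R}^{d})$ is continuous. The case $\mathcal{S}_{\exp,\textnormal{pol}}(\mathbb{Z}^{2d})\to\mathcal{K}_{1}(\mathbb{R}^{d})$ runs identically, except that one now takes $q>\max(p\alpha,p+d)$ so that the weighted sum $\sum_{k,n}e^{(p\alpha-q)|k|}(1+|n|)^{p-q}$ converges.

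For the two distributional cases I would argue by duality. The computation $\langle\varphi, M_{\beta n}T_{\alpha k}\psi\rangle = V_{\bar\psi}\varphi(\alpha k,-\beta n)$ shows that the map $T\varphi:=\{\langle\varphi, M_{\beta n}T_{\alpha k}\psi\rangle\}_{k,n}$ equals $C_{\bar\psi}$ followed by the reflection $n\mapsto -n$; since that reflection preserves all the defining seminorms and is thus a topological isomorphism of the sequence spaces, and since $\bar\psi$ lies in the same space as $\psi$, Lemma~\ref{lemma coeff op} gives that $T\colon\mathcal{S}(\mathbb{R}^{d})\to\mathcal{S}(\mathbb{Z}^{2d})$ and $T\colon\mathcal{K}_{1}(\mathbb{R}^{d})\to\mathcal{S}_{\exp,\textnormal{pol}}(\mathbb{Z}^{2d})$ are continuous. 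Unwinding the bilinear pairings, $\langle\sum_{k,n}c_{k,n}M_{\beta n}T_{\alpha k}\psi,\varphi\rangle=\sum_{k,n}c_{k,n}(T\varphi)_{k,n}=\langle\{c_{k,n}\},T\varphi\rangle$, so that $D_{\psi}$ coincides with the transpose $T^{t}$ on the relevant dual space. Because the transpose of a continuous linear map is automatically continuous for the strong dual topologies (as $T$ carries bounded sets to bounded sets), this yields the continuity of $D_{\psi}\colon\mathcal{S}'(\mathbb{Z}^{2d})\to\mathcal{S}'(\mathbb{R}^{d})$ and of $D_{\psi}\colon\mathcal{S}'_{\exp,\textnormal{pol}}(\mathbb{Z}^{2d})\to\mathcal{K}_{1}'(\mathbb{R}^{d})$.

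The main obstacle is the unconditional convergence in these last two cases, where no seminorm bound on $D_{\psi}$ is available. Here I would use that the partial sum over a finite set $F\subset\mathbb{Z}^{2d}$ equals $T^{t}(c\chi_{F})$, so it suffices to prove that the net $(c\chi_{F^{c}})_{F}$ tends to $0$ in the sequence space as $F\uparrow\mathbb{Z}^{2d}$ and then to invoke continuity of $T^{t}=D_{\psi}$. Each $c=\{c_{k,n}\}$ belongs to some Banach step of the corresponding (DFS)-space, and a direct tail estimate shows that $\|c\chi_{F^{c}}\|_{p'}\to0$ in a slightly larger step norm ($p'>p$) as soon as $F$ exhausts the balls $\{|k|+|n|\le N\}$; since convergence in a step injects continuously into the inductive limit, it follows that $c\chi_{F^{c}}\to0$ in the sequence distribution space. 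Applying the continuous operator $T^{t}$ then gives the unconditional convergence of the Gabor series to $D_{\psi}(c)$ in $\mathcal{S}'(\mathbb{R}^{d})$, respectively $\mathcal{K}_{1}'(\mathbb{R}^{d})$, completing all four cases.
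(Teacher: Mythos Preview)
Your proof is correct and uses the same two main ingredients as the paper: direct seminorm estimates on the time-frequency shifts of $\psi$ for the test-function cases, and transposition of the coefficient operator from Lemma~\ref{lemma coeff op} for the distributional cases. The organization differs slightly. You handle the test-function cases first, obtaining continuity and unconditional convergence in one stroke from the absolute summability of $\sum_{k,n}|c_{k,n}|\,\|M_{\beta n}T_{\alpha k}\psi\|$; the paper instead establishes the distributional cases first, and then deduces continuity on the test-function spaces via the closed graph theorem after checking the image inclusion (by essentially the same estimate you wrote down). Your route is a bit more direct here and avoids the closed graph theorem. For unconditional convergence in the distributional cases, the paper pairs the tail of the series against a bounded set $\mathfrak{B}$ of test functions and controls it using the boundedness of $C_{\psi}(\overline{\mathfrak{B}})$ from Lemma~\ref{lemma coeff op}; you argue more abstractly that $c\chi_{F^{c}}\to 0$ in the (DFS) sequence space (passing to a larger step $p'>p$) and then apply the already-established continuity of $D_{\psi}$. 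Both arguments are short and valid; yours exploits the continuity statement itself, while the paper's makes the strong convergence explicit.
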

\begin{proof}
Consider the reflection in the second coordinate mapping $A_2: \{c_{k,n}\}\mapsto \{c_{k,-n}\}$. Since $D_{\psi}A_2:\mathcal{S}'_{\exp,\textnormal{pol}}(\mathbb{Z}^{2d})\to\mathcal{K}_1'(\mathbb{R}^{d})$ and $\ D_{\psi}A_2: \mathcal{S}'(\mathbb{Z}^{2d}) \to \mathcal{S}'(\mathbb{R}^{d})$ are the transposes of $C_{\psi}: \mathcal{K}_{1}(\mathbb{R}^{d})\to \mathcal{S}_{\exp,\textnormal{pol}}(\mathbb{Z}^{2d})$ and $C_{\psi}: \mathcal{S}(\mathbb{R}^{d})\to \mathcal{S}(\mathbb{Z}^{2d})$, respectively, the continuity of $D_{\psi}:\mathcal{S}'_{\exp,\textnormal{pol}}(\mathbb{Z}^{2d})\to\mathcal{K}_1'(\mathbb{R}^{d})$ and $\ D_{\psi}: \mathcal{S}'(\mathbb{Z}^{2d}) \to \mathcal{S}'(\mathbb{R}^{d})$ follows from Lemma \ref{lemma coeff op}. By the closed graph theorem, the continuity of $D_{\psi}: \mathcal{S}_{\exp,\textnormal{pol}}(\mathbb{Z}^{2d}) \to\mathcal{K}_{1}(\mathbb{R}^{d}$) and $D_{\psi}: \mathcal{S}(\mathbb{Z}^{2d}) \to\mathcal{S}(\mathbb{R}^{d})$ would follow from $D_{\psi}( \mathcal{S}_{\exp,\textnormal{pol}}(\mathbb{Z}^{2d})) \subseteq \mathcal{K}_{1}(\mathbb{R}^{d})$ and $D_{\psi}( \mathcal{S}(\mathbb{Z}^{2d})) \subseteq \mathcal{S}(\mathbb{R}^{d})$ and the latter two continuity cases. We only show that $D_{\psi}( \mathcal{S}_{\exp,\textnormal{pol}}(\mathbb{Z}^{2d})) \subseteq \mathcal{K}_{1}(\mathbb{R}^{d})$, as the proof of  $D_{\psi}( \mathcal{S}(\mathbb{Z}^{2d})) \subseteq \mathcal{S}(\mathbb{R}^{d})$ is similar and we therefore omit it. Let $\{c_{k,n}\}\in  \mathcal{S}_{\exp,\textnormal{pol}}(\mathbb{Z}^{2d})$, we need to show that
\begin{equation}
\label{eqp1}
\sum_{(k,n)\in \mathbb{Z}^{2d}} c_{k,n} M_{\beta n}T_{\alpha k} \psi
\end{equation}
converges in $\mathcal{K}_{1}(\mathbb{R}^{d})$. Since we already know that (\ref{eqp1}) converges in $L^{2}(\mathbb{R}^{d})$ and, clearly, the derivatives of (\ref{eqp1}) are uniformly convergent due to the fast decay of the coefficients, it is enough to prove that for each $l\in \mathbb{N}_0$
\begin{equation}
\label{limit eq}\lim_{_{\substack{M\to\infty \\ N\to\infty}}} \sup_{x\in \mathbb{R}^{d}, \,|s|\leq l}e^{l|x|}\left|\sum_{|k|> M}\sum_{|n|> N }c_{k,n}(M_{\beta n}T_{\alpha
k}\psi)^{(s)}\right|=0.
\end{equation}
We then have
\begin{align*}
&\sup_{x\in \mathbb{R}^{d}, \,|s|\leq l}e^{l|x|}\left|\sum_{|k|> M}\sum_{|n|> N }c_{k,n}(M_{\beta n}T_{\alpha
k}\psi)^{(s)}\right|
\\
&
\
\leq (2\pi\beta+1)^{l}\sup_{x\in \mathbb{R}^{d}, \, |s|\leq
l}e^{l|x|}\sum_{|k|> M}\sum_{|n|> N }
\sum_{j\leq s}\binom{s}{j}|n|^{|s-j|}|c_{k,n}| |\psi^{(j)}(x-\alpha k)|
\\
&
\leq (2\pi\beta+1)^{l}\|\psi\|_{l}\sup_{x\in \mathbb{R}^{d}, \, |s|\leq l}e^{l|x|}\sum_{|k|> M}\sum_{|n|>N
}\left(1+|n|\right)^{|s|}\, |c_{k,n}|e^{-l|x-\alpha k|}
\\
&\leq (2\pi\beta+1)^{l}\|\psi\|_{l}\sum_{|k|> M}\sum_{|n|> N }|c_{k,n}|(1+|
n|)^{l}e^{l|\alpha k|}
=O_{p}((1+N)^{-p}e^{-pM}), \quad \forall p>0.
\end{align*}
This actually proves that (\ref{eqp1}) is unconditionally convergent in $\mathcal{K}_{1}(\mathbb{R}^{d})$. The unconditionally convergence in the case of $\mathcal{S}(\mathbb{R}^{d})$ can be established in a similar fashion.
It remains to show that (\ref{eqp1}) is also unconditionally convergent in the distribution spaces. We only treat the case of $\mathcal{K}_{1}'(\mathbb{R}^{d})$, as the case of convergence in $\mathcal{S}'(\mathbb{R}^{d})$ is completely analogous. The strong neighborhoods of the origin in $\mathcal{K}_{1}'(\mathbb{R}^{d})$ are given by polars of bounded subsets $\mathfrak{B}\subset \mathcal{K}_{1}(\mathbb{R}^{d})$. Let $\{c_{k,n}\}\in\mathcal{S}'_{\exp,\textnormal{pol}}(\mathbb{Z}^{2d})$ and let $\mathfrak{B}\subset \mathcal{K}_{1}(\mathbb{R}^{d})$ be bounded. Find $l$ such that 
$$
C=\sup_{(k,n)\in\mathbb{Z}^{2d}}\left|c_{k,n}\right|e^{-l|k|}(1+|n|)^{-l}<\infty.
$$ 
We have
\begin{align*}
\left|\sup_{\varphi\in\mathfrak{B}}\left\langle \sum_{|k|> M}\sum_{|n|> N }c_{k,n}M_{\beta n}T_{\alpha
k}\psi,\varphi \right\rangle\right| &\leq  C\sum_{|k|> M}\sum_{|n|> N }e^{l|k|}(1+|n|)^{l}\sup_{\varphi\in\mathfrak{B}}|V_{\psi}\overline{\varphi}(\alpha k,\beta n)|
\\
&
=O_{p,\mathfrak{B}}((1+N)^{-p}e^{-pM}), \quad \forall p>0,
\end{align*}
where in the last estimate we have used Lemma \ref{lemma coeff op} to conclude that $C_{\psi}(\overline{\mathfrak{B}})$ is a bounded subset of $\mathcal{S}_{\exp,\textnormal{pol}}(\mathbb{Z}^{2d})$. This completes the proof.
\end{proof}

We now specialize our results to Gabor frames. For it, we first need to discuss the regularity of the (canonical) dual window (cf. Subsection \ref{Gabor frame}).  A deep result by Janssen \cite{Janssen} states that if  (\ref{GF}) is a Gabor frame for $L^{2}(\mathbb{R}^{d})$ with window $\psi\in\mathcal{S}(\mathbb{R}^{d})$, then the dual window $\gamma=S^{-1}\psi\in \mathcal{S}(\mathbb{R}^{d})$ (see also \cite[Cor.~13.5.4, p.~296]{gr01}). We have to establish an analogous regularity result for windows $\psi\in\mathcal{K}_1(\mathbb{R}^{d})$. We shall do so with the aid of results by B\"{o}lcskei and Janssen \cite{b-j} on exponential decay of dual windows and Chung-Kim-Lee characterization of the Hasumi-Silva space \cite{c-k-l-1997}.

\begin{lemma}
\label{lemma regularity dual window}
Let $\psi\in\mathcal{K}_1(\mathbb{R}^{d})$. If $G(\psi, \alpha, \beta)$ is a Gabor frame for $L^{2}(\mathbb{R}^{d})$, then the dual window $\gamma=S^{-1}\psi\in \mathcal{K}_1(\mathbb{R}^{d})$.
\end{lemma}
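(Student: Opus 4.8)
The plan is to reduce membership in $\mathcal{K}_1(\mathbb{R}^d)$ to a pair of decay conditions and then verify each of them with a dual-window decay theorem. By the Chung-Kim-Lee characterization of the Hasumi-Silva space \cite{c-k-l-1997}, a smooth function $\varphi$ belongs to $\mathcal{K}_1(\mathbb{R}^d)$ if and only if both $\varphi$ and its Fourier transform $\widehat{\varphi}$ decay faster than every exponential, namely $\sup_{x}e^{p|x|}|\varphi(x)|<\infty$ and $\sup_{\xi}e^{p|\xi|}|\widehat{\varphi}(\xi)|<\infty$ for all $p\in\mathbb{N}_0$. Thus it suffices to establish these two estimates for $\gamma=S^{-1}\psi$. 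I would therefore isolate a single statement about canonical dual windows: \emph{if a Gabor frame window decays faster than every exponential, then so does its canonical dual window}. Applying this once to $\psi$ and once, through the Fourier transform, to $\widehat{\psi}$ will produce exactly the two required estimates.

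For the decay of $\gamma$ itself I would invoke the B\"{o}lcskei-Janssen results \cite{b-j}: since $\psi\in\mathcal{K}_1(\mathbb{R}^d)$ satisfies $|\psi(x)|\leq C_p e^{-p|x|}$ for every $p$, their quantitative bounds on the canonical dual window yield an exponential decay estimate for $\gamma$ whose rate grows together with the decay rate of the window, so that $\gamma$ also decays faster than every exponential. For the decay of $\widehat{\gamma}$ I would exploit the Fourier invariance of the Gabor structure. Using $\widehat{M_{\beta n}T_{\alpha k}\psi}=c_{k,n}\,M_{-\alpha k}T_{\beta n}\widehat{\psi}$ with $|c_{k,n}|=1$ together with the unitarity of $\mathcal{F}$, the frame operator intertwines as $\mathcal{F}S_{\psi}\mathcal{F}^{-1}=\widetilde{S}_{\widehat{\psi}}$, where $\widetilde{S}_{\widehat{\psi}}$ is the frame operator of the Gabor system $G(\widehat{\psi},\beta,\alpha)$ (the unimodular phases $c_{k,n}$ cancel in the frame operator). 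Consequently $G(\widehat{\psi},\beta,\alpha)$ is a Gabor frame with the same bounds $A,B$, and its canonical dual window is precisely $\widehat{\gamma}=\mathcal{F}(S^{-1}\psi)=\widetilde{S}_{\widehat{\psi}}^{-1}\widehat{\psi}$. Since $\psi\in\mathcal{K}_1(\mathbb{R}^d)$ forces $\widehat{\psi}$ to decay faster than every exponential as well, a second application of \cite{b-j} gives the sought decay of $\widehat{\gamma}$, and the Chung-Kim-Lee characterization then delivers $\gamma\in\mathcal{K}_1(\mathbb{R}^d)$.

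The step I expect to be the main obstacle is the transfer of \emph{all} exponential decay orders through the inversion $S^{-1}$: the B\"{o}lcskei-Janssen theorem must be applied so that, as the decay rate $p$ of the window is taken arbitrarily large, the resulting decay rate of the dual window is likewise unbounded; this is precisely what upgrades "exponential decay" to "decay faster than every exponential", as demanded by the seminorms $\|\cdot\|_p$ defining $\mathcal{K}_1(\mathbb{R}^d)$. A secondary point requiring care is the bookkeeping of the unimodular phases $c_{k,n}$ and of the swap $\alpha\leftrightarrow\beta$ of the lattice parameters under $\mathcal{F}$, so as to confirm that $\widehat{\gamma}$ is genuinely the \emph{canonical} dual window of the transformed frame and not merely some dual window; this follows because the canonical dual is determined by the frame operator, which is preserved by the unitary $\mathcal{F}$.
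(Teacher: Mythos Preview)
Your argument rests on a mistaken version of the Chung--Kim--Lee characterization. The space $\mathcal{K}_1(\mathbb{R}^d)$ is \emph{not} characterized by the pair of conditions ``$\varphi$ and $\widehat{\varphi}$ both decay faster than every exponential''. Indeed $\mathcal{D}(\mathbb{R}^d)\subset\mathcal{K}_1(\mathbb{R}^d)$, yet by Paley--Wiener the Fourier transform of a nontrivial compactly supported function is an entire function of exponential type and cannot decay exponentially on $\mathbb{R}^d$. The space singled out by simultaneous exponential decay of $\varphi$ and $\widehat{\varphi}$ is a Gelfand--Shilov type space, strictly smaller than $\mathcal{K}_1$. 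Because of this, your subsequent claim that ``$\psi\in\mathcal{K}_1(\mathbb{R}^d)$ forces $\widehat{\psi}$ to decay faster than every exponential'' is false in general, and the second application of B\"{o}lcskei--Janssen to $G(\widehat{\psi},\beta,\alpha)$ does not go through.

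The correct Chung--Kim--Lee statement used in the paper is that $\varphi\in\mathcal{K}_1(\mathbb{R}^d)$ if and only if $\sup_{x}|\varphi^{(p)}(x)|<\infty$ for every $p$ \emph{and} $\sup_{x}e^{p|x|}|\varphi(x)|<\infty$ for every $p$. With this in hand, the paper's route is: first invoke Janssen's regularity theorem to get $\gamma\in\mathcal{S}(\mathbb{R}^d)$, which already supplies the boundedness of all derivatives; then apply B\"{o}lcskei--Janssen once to obtain the exponential decay of $\gamma$ itself; the two together feed into Chung--Kim--Lee to conclude $\gamma\in\mathcal{K}_1(\mathbb{R}^d)$. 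Your Fourier-duality manoeuvre is clever but unnecessary here, and in any case cannot be executed because the required hypothesis on $\widehat{\psi}$ is unavailable.
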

\begin{proof} We already know that $\gamma\in\mathcal{S}(\mathbb{R}^{d})$ and $\alpha \beta<1$; in particular
\begin{equation}
\label{eqd1} \sup_{x\in\mathbb{R}^{d}} |\gamma^{(p)}(x)|<\infty, \quad \forall p\in\mathbb{N}_{0}.
\end{equation}
One also has \cite{b-j}  \begin{equation}
\label{eqd2} \sup_{x\in\mathbb{R}^{d}} |\gamma(x)|e^{p|x|}<\infty, \quad \forall p\in\mathbb{N}_{0}.
\end{equation}
Actually B\"{o}lcskei and Janssen \cite{b-j} stated this in weaker terms in \cite[Thm.~1]{b-j} and only for one-dimensional windows, but their argument can readily be adapted to  show that (\ref{eqd2}) holds in the multidimensional case as well.

The inequalities (\ref{eqd1}) and (\ref{eqd2}) turn out to characterize the space $\mathcal{K}_{1}(\mathbb{R}^{d})$, as shown in \cite[Sect. 2]{c-k-l-1997} (where the notation $H(\mathbb{R}^{d})=\mathcal{K}_1(\mathbb{R}^{d})$ is employed). So $\gamma\in \mathcal{K}_1(\mathbb{R}^{d})$.
\end{proof}

The regularity of the dual window allows us to consider Gabor frame expansions of distributions and test functions.

\begin{corollary} \label{coGF}
Let $G(\psi,\alpha,\beta)$ be a Gabor frame with dual window $\gamma$.
\begin{itemize}
\item[$(i)$] If $ \psi\in \mathcal{K}_1(\mathbb{R}^{d})$, then for any $\varphi\in \mathcal{K}_1(\mathbb{R}^{d})$ and $f\in \mathcal{K}'_1(\mathbb{R}^{d})$ we have  the expansions
\begin{equation}
\label{eqGFexp1}
\varphi=\sum_{k,n\in
\mathbb{Z}^{d}}c_{k,n}^{\psi}(\varphi)M_{\beta n}T_{\alpha k}\gamma
\end{equation}
and
\begin{equation}
\label{eqGFexp2}
f=\sum_{k,n\in
\mathbb{Z}^{d}}c_{k,n}^{\psi}(f)M_{\beta n}T_{\alpha k}\gamma
\end{equation}
with unconditional convergence in $\mathcal{K}_1(\mathbb{R}^{d})$ and $\mathcal{K}'_1(\mathbb{R}^{d})$, respectively. Furthermore,
\begin{equation}
\label{oweq3'}\left\langle f,\varphi\right\rangle=\sum_{k,n\in \mathbb{Z}^{d}}c_{k,n}^{\psi}(f)c_{k,-n}^{\bar\gamma}(\varphi)=
 \sum_{k,n\in \mathbb{Z}^{d}}c_{k,-n}^{\bar\gamma}(f)c_{k,n}^{\psi}(\varphi).
  \end{equation}
  \item [$(ii)$] If $ \psi\in \mathcal{S}(\mathbb{R}^{d})$, then $(\ref{eqGFexp1})$, $(\ref{eqGFexp2})$, and $(\ref{oweq3'})$ hold for any $\varphi\in \mathcal{S}(\mathbb{R}^{d})$ and $f\in \mathcal{S}'(\mathbb{R}^{d})$ (with unconditional convergence in $\mathcal{S}(\mathbb{R}^{d})$ and $\mathcal{S}'(\mathbb{R}^{d})$, respectively).
\end{itemize}
\end{corollary}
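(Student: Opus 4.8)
The plan is to establish the expansions by combining the continuity of the coefficient and synthesis operators (Lemma~\ref{lemma coeff op} and Proposition~\ref{proposition synthesis op}) with the regularity of the dual window (Lemma~\ref{lemma regularity dual window}), reducing the distributional statements to the known $L^2$ frame reconstruction~(\ref{GFf}) by a density argument. First I would treat part~$(i)$ at the level of test functions. Since $\psi,\gamma\in\mathcal{K}_1(\mathbb{R}^d)$ by Lemma~\ref{lemma regularity dual window}, the composition $D_\gamma\circ C_\psi$ is a continuous operator on $\mathcal{K}_1(\mathbb{R}^d)$ by the first mappings in (\ref{coeff op mappings 1}) and in Proposition~\ref{proposition synthesis op}. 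For $\varphi\in\mathcal{K}_1(\mathbb{R}^d)\subset L^2(\mathbb{R}^d)$ we already know from (\ref{GFf}) that $D_\gamma C_\psi(\varphi)=\varphi$ with convergence in $L^2$; since Proposition~\ref{proposition synthesis op} guarantees that the same series converges unconditionally in $\mathcal{K}_1(\mathbb{R}^d)$, and the $\mathcal{K}_1$-limit must coincide with the $L^2$-limit, we obtain (\ref{eqGFexp1}) with unconditional convergence in $\mathcal{K}_1(\mathbb{R}^d)$.

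Next I would pass to distributions by duality. The operator $D_\gamma\circ C_\psi$ is also continuous on $\mathcal{K}_1'(\mathbb{R}^d)$ by the second mappings in (\ref{coeff op mappings 1}) and Proposition~\ref{proposition synthesis op}, and the series for $D_\gamma C_\psi(f)$ converges unconditionally in $\mathcal{K}_1'(\mathbb{R}^d)$ for every $f\in\mathcal{K}_1'(\mathbb{R}^d)$. To identify the sum with $f$, I would test against an arbitrary $\varphi\in\mathcal{K}_1(\mathbb{R}^d)$: using the $\mathcal{K}_1$-convergence from the previous step together with continuity of the pairing, one computes
\begin{equation*}
\left\langle \sum_{k,n}c_{k,n}^{\psi}(f)M_{\beta n}T_{\alpha k}\gamma,\varphi\right\rangle=\sum_{k,n}c_{k,n}^{\psi}(f)\left\langle M_{\beta n}T_{\alpha k}\gamma,\varphi\right\rangle=\sum_{k,n}c_{k,n}^{\psi}(f)\,c_{k,-n}^{\bar\gamma}(\varphi),
\end{equation*}
where the last equality is just the definition (\ref{Gabor coefficients}) after unwinding the time-frequency shifts. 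Applying the already established expansion (\ref{eqGFexp1}) for $\varphi$ in the form $\varphi=\sum_{k,n}c_{k,-n}^{\bar\gamma}(\varphi)M_{\beta n}T_{\alpha k}\psi$ (obtained from the second series in (\ref{GFf}) with the roles of $\psi$ and $\gamma$ interchanged) shows that this equals $\langle f,\varphi\rangle$, yielding simultaneously (\ref{eqGFexp2}) and the first equality in (\ref{oweq3'}); the second equality follows by symmetry upon swapping $\psi$ and $\gamma$.

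The main subtlety, rather than a deep obstacle, is bookkeeping the time-frequency shifts so that the coefficients match the claimed form $c_{k,-n}^{\bar\gamma}(\varphi)$: one must use $\langle M_{\beta n}T_{\alpha k}\gamma,\varphi\rangle=\langle\varphi,\overline{M_{-\beta n}T_{\alpha k}\bar\gamma}\rangle$ and the commutation relation $M_\xi T_x=e^{2\pi i x\cdot\xi}T_x M_\xi$, tracking the reflection $n\mapsto-n$ that also appears through the operator $A_2$ in Proposition~\ref{proposition synthesis op}. Part~$(ii)$ is then handled by the verbatim same argument, replacing $\mathcal{K}_1(\mathbb{R}^d)$ and $\mathcal{K}_1'(\mathbb{R}^d)$ by $\mathcal{S}(\mathbb{R}^d)$ and $\mathcal{S}'(\mathbb{R}^d)$, invoking Janssen's theorem (that $\gamma\in\mathcal{S}(\mathbb{R}^d)$) in place of Lemma~\ref{lemma regularity dual window} and the $\mathcal{S}$-mappings in (\ref{coeff op mappings 2}); no new ideas are required.
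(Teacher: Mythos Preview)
Your proposal is correct and follows essentially the same strategy as the paper: both identify $D_\gamma C_\psi$ with the identity on the test function space by combining the $L^2$ reconstruction (\ref{GFf}) with the continuity and unconditional convergence from Lemma~\ref{lemma coeff op} and Proposition~\ref{proposition synthesis op}, and then extend to distributions. The only difference is cosmetic: for (\ref{eqGFexp2}) the paper argues by density (the continuous operator $D_\gamma C_\psi$ on $\mathcal{K}'_1(\mathbb{R}^d)$ agrees with the identity on the dense subspace $\mathcal{K}_1(\mathbb{R}^d)$, hence everywhere), while you unfold the duality pairing explicitly and thereby obtain (\ref{oweq3'}) in the same stroke.

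One small correction to the bookkeeping you rightly flag as the delicate point: the auxiliary expansion you need is $\varphi=\sum_{k,n}c_{k,n}^{\bar\gamma}(\varphi)\,M_{\beta n}T_{\alpha k}\bar\psi$ (the reconstruction for the conjugate pair $(\bar\gamma,\bar\psi)$, which is again a Gabor frame/dual pair), not the version with $\psi$ unconjugated. Pairing this with $f$ gives $\langle f,\varphi\rangle=\sum_{k,n}c_{k,n}^{\bar\gamma}(\varphi)\,c_{k,-n}^{\psi}(f)$, and the reindexing $n\mapsto -n$ then matches your computed sum $\sum_{k,n}c_{k,n}^{\psi}(f)\,c_{k,-n}^{\bar\gamma}(\varphi)$ exactly.
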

\begin{proof} We have that $\operatorname*{id}_{L^{2}(\mathbb{R}^{d})}= D_{\gamma} C_{\psi}$, the unconditional convergence of (\ref{eqGFexp1}) on the test function spaces then follows from Lemma \ref{lemma coeff op} and Proposition \ref{proposition synthesis op}. Since $\operatorname*{id}= D_{\gamma} C_{\psi} $ on dense subspaces of $\mathcal{S}'(\mathbb{R}^{d})$ and $\mathcal{K}'_{1}(\mathbb{R}^{d})$, (\ref{eqGFexp2}) for distributions is also a consequence of Lemma \ref{lemma coeff op} and Proposition \ref{proposition synthesis op}. The relation (\ref{oweq3'}) is now trivial.
\end{proof}

Note that Corollary \ref{coGF} could also be obtained from results on modulation spaces (cf. \cite[Cor. 12.2.6]{gr01}) in combination with Lemma \ref{lemma regularity dual window}.

Summing up, we arrive at the main result of this section, a topological characterization of $\mathcal{K}'_{1}(\mathbb{R}^{d})$ and $\mathcal{S}'(\mathbb{R}^{d})$ via the Gabor frame coefficient operator.

\begin{theorem}
\label{mainthtopologyGF} Let $G(\psi,\alpha,\beta)$ be a Gabor frame.
\begin{itemize}
\item[$(i)$] If $ \psi\in \mathcal{K}_1(\mathbb{R}^{d})$, then the Gabor coefficient operators $(\ref{coeff op mappings 1})$ are isomorphisms of topological vector spaces (onto their images $C_{\psi}(\mathcal{K}_1(\mathbb{R}^{d}))$ and $C_{\psi}(\mathcal{K}'_1(\mathbb{R}^{d}))$).
  \item [$(ii)$] If $ \psi\in \mathcal{S}(\mathbb{R}^{d})$,  then the Gabor coefficient operators $(\ref{coeff op mappings 2})$ are isomorphisms of topological vector spaces (onto their images $C_{\psi}(\mathcal{S}(\mathbb{R}^{d}))$ and $C_{\psi}(\mathcal{S}'(\mathbb{R}^{d}))$).
\end{itemize}
\end{theorem}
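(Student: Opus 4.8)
The plan is to deduce the theorem from the operator identity $D_{\gamma}C_{\psi}=\operatorname{id}$, where $\gamma=S^{-1}\psi$ denotes the canonical dual window of the frame $G(\psi,\alpha,\beta)$. The point is that $C_{\psi}$ then possesses a continuous left inverse on each of the four spaces, and a continuous injective linear map admitting a continuous left inverse is automatically an isomorphism of topological vector spaces onto its image.

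First I would verify that the dual window lies in the correct test function space: Lemma~\ref{lemma regularity dual window} gives $\gamma\in\mathcal{K}_1(\mathbb{R}^{d})$ when $\psi\in\mathcal{K}_1(\mathbb{R}^{d})$, while Janssen's theorem gives $\gamma\in\mathcal{S}(\mathbb{R}^{d})$ when $\psi\in\mathcal{S}(\mathbb{R}^{d})$. This is exactly what is needed so that, via Proposition~\ref{proposition synthesis op}, the synthesis operator $D_{\gamma}$ is continuous on the relevant sequence spaces (with targets $\mathcal{K}_1(\mathbb{R}^{d})$ and $\mathcal{K}'_1(\mathbb{R}^{d})$ in case $(i)$, and $\mathcal{S}(\mathbb{R}^{d})$ and $\mathcal{S}'(\mathbb{R}^{d})$ in case $(ii)$). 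The expansions (\ref{eqGFexp1}) and (\ref{eqGFexp2}) of Corollary~\ref{coGF} are precisely the statement that $D_{\gamma}C_{\psi}=\operatorname{id}$ on all four spaces. From this single identity two conclusions follow at once. Injectivity of $C_{\psi}$ is immediate, since $C_{\psi}f=0$ forces $f=D_{\gamma}C_{\psi}f=0$. Moreover, writing $X$ for any one of the four spaces, the inverse of $C_{\psi}$ regarded as a map of $X$ onto its image $C_{\psi}(X)$ is the restriction $D_{\gamma}|_{C_{\psi}(X)}$, which is continuous for the subspace topology because $D_{\gamma}$ is continuous on the whole ambient sequence space. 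Combined with the continuity of $C_{\psi}$ furnished by Lemma~\ref{lemma coeff op}, this shows that $C_{\psi}$ is an isomorphism of topological vector spaces onto $C_{\psi}(X)$, establishing both $(i)$ and $(ii)$.

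I expect no serious obstacle to remain at this stage, since the substantive work has already been absorbed into the preparatory results. The genuine crux was the regularity of the dual window, i.e.\ Lemma~\ref{lemma regularity dual window}, which rests on the Chung--Kim--Lee characterization of $\mathcal{K}_1(\mathbb{R}^{d})$ together with the B\"{o}lcskei--Janssen exponential decay estimates; this is what makes $D_{\gamma}$ available as a \emph{continuous} operator with the correct target, and without it the factorization argument would not close. Granting this regularity and the continuity of the synthesis operator in Proposition~\ref{proposition synthesis op}, the theorem is a purely formal consequence of $D_{\gamma}C_{\psi}=\operatorname{id}$, the proof reducing to the assembly of Lemma~\ref{lemma coeff op}, Proposition~\ref{proposition synthesis op}, and Corollary~\ref{coGF} into the desired embedding statement.
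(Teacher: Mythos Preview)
Your argument is correct and matches the paper's own proof essentially verbatim: the paper also derives injectivity from Corollary~\ref{coGF} and the continuity of the inverse from Proposition~\ref{proposition synthesis op} combined with Corollary~\ref{coGF}, i.e.\ from the factorization $D_{\gamma}C_{\psi}=\operatorname{id}$. Your write-up is simply more explicit about the role of the dual-window regularity and the left-inverse mechanism, but the underlying approach is the same.
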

\begin{proof} That the involved mappings are injective follows from Corollary \ref{coGF}. The inverses are continuous because of Proposition \ref{proposition synthesis op} and Corollary \ref{coGF}.
\end{proof}


Based on Theorem \ref{mainthtopologyGF} we can now easily obtain our desired Gabor analysis characterizations of bounded sets and convergence in the distribution spaces $\mathcal{K}_{1}'(\mathbb{R}^{d})$ and $\mathcal{S}'(\mathbb{R}^{d})$.

\begin{corollary} \label{corollaryGF1} Let $G(\psi,\alpha,\beta)$ be a Gabor frame.
\begin{itemize}
\item[$(i)$] Suppose that $ \psi\in \mathcal{K}_1(\mathbb{R}^{d})$. Then, a subset
$\mathfrak{B}\subset{\mathcal K}'_1(\mathbb {R}^{d})$ is bounded in ${\mathcal K}'_1(\mathbb{R}^{d})$ if and only if there is $\tau>0$ such that
 \begin{equation}\label{equation111}
 \sup_{ f\in\mathfrak{B}}\sup_{ (k,n)\in\mathbb{Z}^{2d}}\frac{|c_{k,n}^{\psi}(f)|}{e^{\tau|k|}(1+|n|)^{\tau}}<\infty.
 \end{equation}
\item [$(ii)$] Suppose $ \psi\in \mathcal{S}(\mathbb{R}^{d})$. Then, a subset
$\mathfrak{B}\subset{\mathcal S}'(\mathbb{R}^{d})$ is bounded in ${\mathcal S}'(\mathbb {R}^{d})$ if and only if there is $\tau>0$ such that
 \begin{equation}\label{equation11}
 \sup_{ f\in\mathfrak{B}}\sup_{ (k,n)\in\mathbb{Z}^{2d}}\frac{|c_{k,n}^{\psi}(f)|}{(1+|k|+|n|)^{\tau}}<\infty.
 \end{equation}

\end{itemize}
\end{corollary}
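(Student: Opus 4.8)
The plan is to deduce the corollary directly from the topological isomorphism of Theorem \ref{mainthtopologyGF} together with the explicit structure of the sequence spaces. I treat case $(i)$ only; case $(ii)$ is entirely analogous, with the weights $e^{\tau|k|}(1+|n|)^{\tau}$ replaced throughout by $(1+|k|+|n|)^{\tau}$. By part $(i)$ of Theorem \ref{mainthtopologyGF}, the operator $C_{\psi}$ is a topological isomorphism of $\mathcal{K}'_1(\mathbb{R}^{d})$ onto the subspace $C_{\psi}(\mathcal{K}'_1(\mathbb{R}^{d}))\subseteq \mathcal{S}'_{\exp,\textnormal{pol}}(\mathbb{Z}^{2d})$ carrying the subspace topology. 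Hence $\mathfrak{B}$ is bounded in $\mathcal{K}'_1(\mathbb{R}^{d})$ if and only if $C_{\psi}(\mathfrak{B})=\{\{c^{\psi}_{k,n}(f)\}_{(k,n)\in\mathbb{Z}^{2d}}:f\in\mathfrak{B}\}$ is bounded in $C_{\psi}(\mathcal{K}'_1(\mathbb{R}^{d}))$. Since a subset of a topological subspace is bounded in the subspace exactly when it is bounded in the ambient space, the whole problem reduces to showing that $C_{\psi}(\mathfrak{B})$ is bounded in $\mathcal{S}'_{\exp,\textnormal{pol}}(\mathbb{Z}^{2d})$ if and only if $(\ref{equation111})$ holds.

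The remaining step is therefore a purely intrinsic characterization of bounded subsets of the Silva space $\mathcal{S}'_{\exp,\textnormal{pol}}(\mathbb{Z}^{2d})$. Recall that this is a (DFS)-space, namely the inductive limit of the Banach spaces
\[
E_{p}=\Bigl\{\{c_{k,n}\}:\ \|\{c_{k,n}\}\|_{p}:=\sup_{(k,n)\in\mathbb{Z}^{2d}}|c_{k,n}|\,e^{-p|k|}(1+|n|)^{-p}<\infty\Bigr\},\quad p\in\mathbb{N}_{0},
\]
with linking maps $E_{p}\hookrightarrow E_{p+1}$ that are compact (compactness is immediate from $e^{-|k|}(1+|n|)^{-1}\to 0$ as $|k|+|n|\to\infty$). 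A fundamental property of (DFS)-spaces is that they are regular inductive limits: every bounded set is contained in, and bounded in, a single step $E_{\tau}$. Consequently $C_{\psi}(\mathfrak{B})$ is bounded if and only if there exists $\tau$ with $\sup_{f\in\mathfrak{B}}\|C_{\psi}(f)\|_{\tau}<\infty$, which is precisely condition $(\ref{equation111})$.

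The converse direction needs no extra work: if $(\ref{equation111})$ holds for some $\tau$, then $C_{\psi}(\mathfrak{B})$ lies in a $\|\cdot\|_{\tau}$-bounded subset of $E_{\tau}$, hence is bounded in $\mathcal{S}'_{\exp,\textnormal{pol}}(\mathbb{Z}^{2d})$, and pulling back through the isomorphism of Theorem \ref{mainthtopologyGF} yields that $\mathfrak{B}$ is bounded in $\mathcal{K}'_1(\mathbb{R}^{d})$. The only genuinely non-formal ingredient is the regularity of the (DFS) inductive limit, i.e.\ the fact that a single weight exponent $\tau$ suffices uniformly over $\mathfrak{B}$ rather than exponents that might grow along the set; I expect this to be the crux of the argument. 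It is, however, a classical fact for Silva spaces, and should it be preferred to avoid invoking it, one can argue directly: the compactness of the linking maps $E_{p}\hookrightarrow E_{p+1}$ permits a standard diagonal extraction that rules out the existence of a bounded sequence escaping to arbitrarily high steps.
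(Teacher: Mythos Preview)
Your proof is correct and follows essentially the same approach as the paper: invoke Theorem~\ref{mainthtopologyGF} to transfer boundedness to the sequence space, then use the fact that Silva spaces are regular inductive limits of Banach spaces so that bounded sets sit in a single step $E_{\tau}$. The paper's proof is terser but identical in substance, explicitly citing the regularity of the inductive limit as the reason the estimates (\ref{equation111}) and (\ref{equation11}) characterize boundedness.
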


\begin{proof} By Theorem \ref{mainthtopologyGF}, we have that $\mathfrak{B}$ is bounded if and only if $C_{\psi}(\mathfrak{B})$ is bounded. The estimates (\ref{equation111}) and (\ref{equation11}) are equivalent to the boundedness of $C_{\psi}(\mathfrak{B})$ in $\mathcal{S}'_{\exp,\textnormal{pol}}(\mathbb{Z}^{2d})$ and $\mathcal{S}'(\mathbb{Z}^{2d})$, respectively, because, as Silva spaces, they are the regular inductive limits of Banach spaces.
\end{proof}

Likewise, convergence of nets of distributions can also be characterized in terms of Gabor frame coefficients.  We shall make use of the following corollary in the next section.

\begin{corollary} \label{corollaryGF2} Let $G(\psi,\alpha,\beta)$ be a Gabor frame with dual window $\gamma$.
\begin{itemize}
\item[$(i)$] Suppose that $ \psi\in \mathcal{K}_1(\mathbb{R}^{d})$. Then, a net $\{f_{\lambda}\}_{\lambda\in \mathbb{R}_{+}}$ of distributions of exponential type converges in ${\mathcal K}'_1(\mathbb{R}^{d})$ as $\lambda\to\infty$ if and only if
\begin{equation}
\label{equation112}
\lim_{\lambda\to\infty}c^{\psi}_{k,n}(f_{\lambda})=a_{k,n}<\infty\ , \quad \mbox{for each }(k,n)\in\mathbb{Z}^{2d},
\end{equation}
and there are $\tau>0$ and $\lambda_0$ such that
 \begin{equation}\label{equation113}
 \sup_{ \lambda\geq\lambda_0}\sup_{ (k,n)\in\mathbb{Z}^{2d}}\frac{|c_{k,n}^{\psi}(f_{\lambda})|}{e^{\tau|k|}(1+|n|)^{\tau}}<\infty.
 \end{equation}
\item [$(ii)$] Suppose $ \psi\in \mathcal{S}(\mathbb{R}^{d})$.
Then, a net $\{f_{\lambda}\}_{\lambda\in \mathbb{R}_{+}}$ of tempered distributions converges in ${\mathcal S}'(\mathbb{R}^{d})$ as $\lambda\to\infty$ if and only if it satisfies $(\ref{equation112})$ and there are $\tau>0$ and $\lambda_0$ such that
 \begin{equation}\label{equation114}
 \sup_{ \lambda\geq\lambda_0}\sup_{ (k,n)\in\mathbb{Z}^{2d}}\frac{|c_{k,n}^{\psi}(f_{\lambda})|}{(1+|k|+|n|)^{\tau}}<\infty.
 \end{equation}

\end{itemize}
In such a case the limit functional, $\lim_{\lambda\to\infty}f_{\lambda}=g$, is given by
$
\displaystyle g=\sum_{k,n\in
\mathbb{Z}^{d}}a_{k,n}M_{\beta n}T_{\alpha k}\gamma .
$
\end{corollary}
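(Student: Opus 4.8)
The plan is to transport the whole question to the sequence-space side via the topological isomorphism of Theorem \ref{mainthtopologyGF}, and then to characterize convergence directly in the Silva spaces $\mathcal{S}'_{\exp,\textnormal{pol}}(\mathbb{Z}^{2d})$ and $\mathcal{S}'(\mathbb{Z}^{2d})$. I would treat case $(i)$ in full; case $(ii)$ is verbatim the same, with the weight $e^{-\tau|k|}(1+|n|)^{-\tau}$ replaced by $(1+|k|+|n|)^{-\tau}$. Since $C_{\psi}$ is a topological isomorphism onto its image, $f_{\lambda}\to g$ in $\mathcal{K}'_{1}(\mathbb{R}^{d})$ holds if and only if the net $c^{(\lambda)}:=C_{\psi}(f_{\lambda})$ converges to $C_{\psi}(g)$ in $\mathcal{S}'_{\exp,\textnormal{pol}}(\mathbb{Z}^{2d})$. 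Everything thus reduces to showing that a net $\{c^{(\lambda)}\}$ in $\mathcal{S}'_{\exp,\textnormal{pol}}(\mathbb{Z}^{2d})$ converges precisely when it converges coordinatewise and some tail is bounded.

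For necessity, coordinatewise convergence (\ref{equation112}) is immediate, since each coordinate projection is continuous on the Silva space (it is dominated by any step norm); on the distribution side it simply reads $c^{\psi}_{k,n}(f_{\lambda})=\langle f_{\lambda},\overline{M_{\beta n}T_{\alpha k}\psi}\rangle\to\langle g,\overline{M_{\beta n}T_{\alpha k}\psi}\rangle=a_{k,n}$. For the uniform bound (\ref{equation113}) I would use that $\mathcal{S}'_{\exp,\textnormal{pol}}(\mathbb{Z}^{2d})$ is a \emph{regular} inductive limit of Banach spaces: a convergent net has a bounded tail, and regularity places that bounded tail inside one Banach step, which is exactly (\ref{equation113}) (equivalently, one may quote Corollary \ref{corollaryGF1}). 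The only delicate point is that we are dealing with nets, not sequences, so I would spell out the reduction: if every tail were unbounded one could pick $\lambda_{m}\to\infty$ with $\|c^{(\lambda_{m})}\|_{m}>m$ (where $\|c\|_{p}=\sup_{k,n}|c_{k,n}|e^{-p|k|}(1+|n|)^{-p}$), contradicting that the convergent \emph{sequence} $\{c^{(\lambda_{m})}\}$ is bounded and hence sits in a single step $E_{q}$, because then $\|c^{(\lambda_{m})}\|_{m}\le\|c^{(\lambda_{m})}\|_{q}$ stays bounded for $m\ge q$.

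The substantive part is sufficiency, which I would settle by the standard ``compact embedding'' estimate for weighted sup-norm spaces. Condition (\ref{equation113}) says $\sup_{\lambda\ge\lambda_{0}}\|c^{(\lambda)}\|_{\tau}=M<\infty$; letting $\lambda\to\infty$ coordinatewise gives $a=\{a_{k,n}\}$ with $\|a\|_{\tau}\le M$, hence $\|c^{(\lambda)}-a\|_{\tau}\le 2M$. I would then show convergence already in the step with exponent $\tau+1$: for $|k|+|n|>R$,
\[
|c^{(\lambda)}_{k,n}-a_{k,n}|\,e^{-(\tau+1)|k|}(1+|n|)^{-(\tau+1)}\le 2M\,e^{-|k|}(1+|n|)^{-1},
\]
which is $<\varepsilon$ uniformly in $\lambda$ once $R$ is large, while over the finite set $|k|+|n|\le R$ coordinatewise convergence (\ref{equation112}) forces the corresponding supremum below $\varepsilon$ for large $\lambda$. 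Thus $\|c^{(\lambda)}-a\|_{\tau+1}\to 0$, and since this step injects continuously into the inductive limit, $c^{(\lambda)}\to a$ in $\mathcal{S}'_{\exp,\textnormal{pol}}(\mathbb{Z}^{2d})$.

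Finally, to identify the limit I would invoke $\operatorname{id}=D_{\gamma}C_{\psi}$ from Corollary \ref{coGF}: since $f_{\lambda}=D_{\gamma}(c^{(\lambda)})$ and $D_{\gamma}$ is continuous by Proposition \ref{proposition synthesis op}, the convergence $c^{(\lambda)}\to a$ yields $f_{\lambda}\to D_{\gamma}(a)=\sum_{k,n}a_{k,n}M_{\beta n}T_{\alpha k}\gamma=:g$, which is at once the asserted formula for the limit functional (this route also keeps sufficiency self-contained, sidestepping any appeal to surjectivity of $C_{\psi}$ onto a closed subspace). I expect the main obstacle to be precisely the sufficiency estimate, namely verifying that coordinatewise convergence together with a bound in one step upgrades to genuine convergence in the inductive-limit topology; the gap between the exponents $\tau$ and $\tau+1$ is what drives this, and it is the Silva (rather than merely (DF)) structure of the sequence spaces that makes such an upgrade available.
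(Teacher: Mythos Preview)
Your proof is correct, but the route you take for the converse differs from the paper's. The paper stays on the distribution side: from (\ref{equation113}) (resp.\ (\ref{equation114})) and Corollary~\ref{corollaryGF1} the tail $\{f_{\lambda}\}_{\lambda\ge\lambda_0}$ is bounded, hence equicontinuous by Banach--Steinhaus; (\ref{equation112}) gives convergence of $\langle f_{\lambda},\varphi\rangle$ for $\varphi$ in the linear span of $G(\psi,\alpha,\beta)$, which is dense by Corollary~\ref{coGF}; equicontinuity then upgrades this to convergence uniformly on compact sets of test functions, and the Montel property of $\mathcal{K}_{1}(\mathbb{R}^{d})$ (resp.\ $\mathcal{S}(\mathbb{R}^{d})$) yields strong convergence. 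You instead transport everything to the sequence side and prove convergence \emph{directly} in the step $E_{\tau+1}$ via the explicit ``gap'' estimate $e^{-|k|}(1+|n|)^{-1}$, then push the limit back through $D_{\gamma}$. Your argument is more elementary---it avoids Banach--Steinhaus, density, and Montel altogether and even yields a quantitative rate in a concrete step norm---while the paper's argument is the standard equicontinuity-plus-density template, which is shorter once those tools are granted and transfers more readily to other test-function spaces. Your handling of the net-versus-sequence issue in the necessity part (extracting a sequence $\lambda_{m}\ge m$ to force a contradiction with regularity) is a genuine point that the paper glosses over; it is needed precisely because convergent \emph{nets} in a non-metrizable space need not have a single bounded tail, and your reduction via the special index set $\mathbb{R}_{+}$ is the right fix. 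Finally, your use of $f_{\lambda}=D_{\gamma}C_{\psi}f_{\lambda}$ to identify the limit as $D_{\gamma}(a)$ neatly sidesteps any question of whether $a$ lies in the range of $C_{\psi}$, which the isomorphism statement alone does not immediately settle.
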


\begin{proof} The direct part follows from the weak convergence and Theorem \ref{mainthtopologyGF}. Conversely, assume (\ref{equation112}) and (\ref{equation113}) (resp. (\ref{equation114})). Corollary \ref{corollaryGF1} and the Banach-Steinhaus theorem yield that the net forms an equicontinuous set. The assumption (\ref{equation112}) gives convergence of net on the linear span of $G(\psi, \alpha,\beta)$, which is dense by interchanging the roles of $\psi$ and $\gamma$ in Theorem \ref{mainthtopologyGF}. Therefore, the net converges in the topology of uniform convergence over compact subsets of $\mathcal{K}_1(\mathbb{R}^{d})$ ($\mathcal{S}(\mathbb{R}^{d})$, resp.). The rest follows from the Montel property of these spaces.
\end{proof}

\begin{remark} A question that remains open is whether analogs to the results of this section can be obtained for the space of Lizorkin distributions $\mathcal{S}'_{0}(\mathbb{R})$ (the quotient of $\mathcal{S}'(\mathbb{R})$ by the space of polynomials \cite{holschneider}) in terms of \emph{wavelet frames}. Two of the authors have provided such results when using orthogonal wavelets \cite{Saneva1}. Some difficulties of working with wavelet frames have been pointed out in \cite[Remark 3.11]{Saneva1}. \end{remark}

\section{Asymptotic behavior of distributions. Tauberian theorems}\label{Section Asymptotic behavior of distributions}

In this section we apply our Gabor frame characterization of convergence (Corollary \ref{corollaryGF2}) to characterize asymptotic properties of Schwartz distributions. Our results can be interpreted as Tauberian theorems for the STFT.

We are interested in the so-called $S$-asymptotic behavior of distributions \cite[Chap.~1]{PSV}. The natural framework for this notion is the space of distributions of exponential type. The idea of the $S$-asymptotics is to study the asymptotic properties of the translates $T_{-h}f$ with respect to a measurable comparison function  $c:\mathbb{R}^{d}\to (0,\infty)$. We say that $f\in\mathcal{D}'(\mathbb{R}^{d})$ has $S$-asymptotic behavior ($S$-asymptotics) with respect to $c$ if there is $g\in\mathcal{D}'(\mathbb{R}^{d})$ such that
\begin{equation}
\label{Seq1}\lim_{|h|\to\infty} \frac{1}{c(h)}T_{-h}f= g \  \  \ \mbox{in }  \mathcal{D}'(\mathbb{R}^{d}).
\end{equation}
The distribution $g$ is not arbitrary; in fact, one can show \cite[Sect. 1.2]{PSV} that the relation (\ref{Seq1}) forces it to have the form $g(x)=C e^{b\cdot x}$, for some $C\in\mathbb{R}$ and $b\in \mathbb{R}^{d}$. Furthermore, if $C\neq 0$, one can also prove \cite[Sect. 1.2]{PSV} that 
\begin{equation}
\label{Seq2}\lim_{|h|\to\infty} \frac{c(x+h)}{c(h)}= e^{b \cdot x},  \  \  \ \mbox{uniformly for }x \mbox{ in compact subsets of } \mathbb{R}^{d}.  
\end{equation}

We shall assume that (\ref{Seq2}) is always satisfied. One can then show that necessarily $f\in\mathcal{K}'_{1}(\mathbb{R}^{d})$ and that the limit (\ref{Seq1}) actually holds in $\mathcal{K}'_1(\mathbb{R}^{d})$. (This follows from the structural theorem \cite[Thm.~1.9, p.~44]{PSV} and (\ref{weightestimate}) below.) It is also easy to see that (\ref{Seq2}) implies that $c$ is locally bounded for large arguments. Since only the terminal behavior of $c$ matters for the $S$-asymptotics (\ref{Seq1}), we can also suppose from now on that $c\in L^{\infty}_{loc}(\mathbb{R}^{d})$ by simply modifying it away from a neighborhood of $\infty$. Under these circumstances, we have the estimate \cite[Lemma 6.1]{KPSV}
\begin{equation}
\label{weightestimate}
\frac{c(x+h)}{c(h)}\leq Ae^{r|x|},  \quad x,h\in\mathbb{R}^{d}, \quad \mbox{for some }r,A>0.
\end{equation}

We will use the more suggestive notation
\begin{equation}
\label{Seq3}f(x+h)\sim c(h)g(x)  \  \  \ \mbox{ in } \mathcal{K}'_{1}(\mathbb{R}^{d})\  \mbox{ as }|h|\to\infty
\end{equation}
for denoting (\ref{Seq1}). The next theorem shows that it is possible to improve \cite[Thm. ~6.2]{KPSV} by discretizing the frequency variable in the STFT if one employs a window $\psi$ that generates a Gabor frame.

\begin{theorem} \label{tttGF1} Let $G(\psi,\alpha,\beta)$ be a Gabor frame with window $\psi\in \mathcal{K}_{1}(\mathbb{R}^{d})$ and let $c\in L^{\infty}_{loc}(\mathbb{R}^{d})$ satisfy $(\ref{Seq2})$. Then, $f\in\mathcal{K}_{1}'(\mathbb{R}^{d})$ has the $S$-asymptotic behavior $(\ref{Seq3})$ if and only if
\begin{equation}\label{limit2} \lim_{|x|\to \infty }e^{2\pi i \beta n \cdot x}\frac{V_{\psi}f(x,\beta n)}{c(x)}=:a_n\in\mathbb{C}\quad \mbox{exists for every } n\in\mathbb{Z}^{d}\end{equation}
and there is $\tau\in\mathbb{R}$ such that
\begin{equation}\label{Taubeq}
\sup_{(x,n)\in\mathbb {R}^{d}\times \mathbb{Z}^{d}}\frac{|V_\psi f(x,\beta n)|}{c(x)(1+|n|)^{\tau}}<\infty.
\end{equation}
If this is the case, the limit function involved in the S-asymptotic behavior $(\ref{Seq3})$ has the form $g(x)=C e^{b\cdot x}$ where the constants satisfy the equations
\begin{equation}
\label{eqconstants}
a_n=C \overline{\widehat{\psi}\left(-\beta n+ib/(2\pi)\right)}, \quad n\in\mathbb{Z}^{d}.
\end{equation}
\end{theorem}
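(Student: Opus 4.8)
The plan is to recognize the $S$-asymptotic behavior $(\ref{Seq3})$ as the convergence of the net $f_{h}:=c(h)^{-1}T_{-h}f$ and to run it through the Gabor-coefficient convergence criterion of Corollary \ref{corollaryGF2}(i). Here the net is indexed by $\mathbb{R}^{d}$ directed by $h\preceq h'\Leftrightarrow |h|\leq|h'|$, and since the proof of Corollary \ref{corollaryGF2} uses only the Banach--Steinhaus theorem, a density argument, and the Montel property, it applies verbatim to this net. The computational heart of the matter is the time-frequency shift identity
\[
V_{\psi}(T_{-h}f)(x,\xi)=e^{2\pi i\xi\cdot h}\,V_{\psi}f(x+h,\xi),
\]
obtained from the change of variables $t\mapsto t+h$ in the defining integral of the STFT, which gives the Gabor coefficients of the net as
\[
c^{\psi}_{k,n}(f_{h})=\frac{e^{2\pi i\beta n\cdot h}}{c(h)}\,V_{\psi}f(\alpha k+h,\beta n).
\]

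First I would verify that, via this formula, the hypotheses $(\ref{limit2})$ and $(\ref{Taubeq})$ are equivalent to the hypotheses $(\ref{equation112})$ and $(\ref{equation113})$ of Corollary \ref{corollaryGF2}(i). Setting $y=\alpha k+h$ and inserting $c(y)$,
\[
c^{\psi}_{k,n}(f_{h})=e^{-2\pi i\alpha\beta\,k\cdot n}\,\frac{c(h+\alpha k)}{c(h)}\left(e^{2\pi i\beta n\cdot y}\frac{V_{\psi}f(y,\beta n)}{c(y)}\right).
\]
As $|h|\to\infty$ (equivalently $|y|\to\infty$ for fixed $k$) the bracket tends to $a_{n}$ by $(\ref{limit2})$ while $c(h+\alpha k)/c(h)\to e^{\alpha b\cdot k}$ by $(\ref{Seq2})$; hence $(\ref{equation112})$ holds for every $(k,n)$ with $a_{k,n}=e^{\alpha b\cdot k}e^{-2\pi i\alpha\beta\,k\cdot n}a_{n}$, and conversely $(\ref{equation112})$ at $k=0$ is literally $(\ref{limit2})$. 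For the uniform estimate I would use $(\ref{weightestimate})$: the bound $c(h+\alpha k)/c(h)\leq Ae^{r\alpha|k|}$ shows that $(\ref{Taubeq})$ forces $(\ref{equation113})$ once $\tau$ is enlarged to absorb $r\alpha$, while putting $k=0$ in $(\ref{equation113})$ recovers $(\ref{Taubeq})$ for large $|x|$; the remaining compact range $|x|\leq h_{0}$ is handled by $(\ref{boundSTFTK'})$ together with the lower bound on $c$ on compacta, which follows from $(\ref{weightestimate})$ applied with interchanged arguments and guarantees $1/c\in L^{\infty}_{\mathrm{loc}}$. By Corollary \ref{corollaryGF2}(i) these conditions are then equivalent to $f_{h}\to g$ in $\mathcal{K}'_{1}(\mathbb{R}^{d})$, which is exactly $(\ref{Seq3})$.

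It remains to identify $g$ and establish $(\ref{eqconstants})$. Invoking the structural description of $S$-asymptotic limits recalled after $(\ref{Seq1})$, the limit must be of the form $g(x)=Ce^{b\cdot x}$. Since each coefficient functional $f\mapsto V_{\psi}f(0,\beta n)=\langle f,\overline{M_{\beta n}\psi}\rangle$ is continuous on $\mathcal{K}'_{1}(\mathbb{R}^{d})$, the convergence $f_{h}\to g$ gives $a_{n}=a_{0,n}=V_{\psi}g(0,\beta n)$; a direct evaluation of the STFT of the exponential $Ce^{b\cdot x}$, in which the integral closes up to the entire extension of $\widehat{\psi}$, yields $V_{\psi}g(0,\beta n)=C\,\overline{\widehat{\psi}(-\beta n+ib/(2\pi))}$, which is $(\ref{eqconstants})$.

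I expect the main obstacle to be the bidirectional matching in the second paragraph: Corollary \ref{corollaryGF2} constrains the \emph{whole} $(k,n)$-lattice of frame coefficients, whereas $(\ref{limit2})$ and $(\ref{Taubeq})$ only sample the frequency lattice $\beta n$ at a continuous space variable. The point that makes the reduction work is that the continuous translation parameter $h$ already sweeps the spatial lattice $\alpha k$, so the apparent extra $k$-information is not new but is generated from the $k=0$ data by the asymptotic homogeneity $(\ref{Seq2})$ of $c$; checking that this reproduces precisely the limits $a_{k,n}$ and, uniformly, the bound $(\ref{equation113})$ — with the compact $x$-range and the positivity of $c$ dealt with via $(\ref{boundSTFTK'})$ and $(\ref{weightestimate})$ — is where the care lies.
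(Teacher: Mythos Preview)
Your proposal is correct and follows essentially the same route as the paper's own proof: both introduce the net $f_{h}=c(h)^{-1}T_{-h}f$, compute its Gabor coefficients via the translation identity for the STFT, and then feed the result into Corollary~\ref{corollaryGF2}(i), using $(\ref{Seq2})$ for the limits and $(\ref{weightestimate})$ for the uniform bound. In fact you are slightly more careful than the paper, which passes from $(\ref{equation113})$ to $(\ref{Taubeq})$ ``by taking $k=0$'' without explicitly mentioning that the supremum over the compact range $|x|\leq\lambda_{0}$ must be handled separately via $(\ref{boundSTFTK'})$ and the local lower bound on $c$ that you extract from $(\ref{weightestimate})$.
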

\begin{proof} We are going to apply Corollary
\ref{corollaryGF2}. For it, consider the net
\begin{equation}
\label{eq net translation}
f_{h}=\frac{1}{c(h)}T_{-h}f,
\end{equation}
for $h\in\mathbb{R}^{d}$. We have that the Gabor frame coefficients of the net are given by
 $$
 c^{\psi}_{k,n}(f_h)=e^{2\pi i \beta n \cdot h}\frac{V_{\psi}(\alpha k+h,\beta n)}{c(h)}.
 $$
 Assume that $f\in\mathcal{K}_{1}'(\mathbb{R}^{d})$ satisfies (\ref{Seq3}). The limit (\ref{equation112}) yields in this case (\ref{limit2}) with
 $a_n=c^{\psi}_{0,n}(g)=\int_{\mathbb{R}^{d}}g(t)\overline{\psi(t)}e^{-2\pi i \beta n\cdot t}dt$. Since $g$ must have the form $g(x)=Ce^{b\cdot x}$, this also shows that $C$ and $b$ must satisfy the equations (\ref{eqconstants}), while (\ref{equation113}) directly leads to (\ref{Taubeq}) by taking $k=0$. Conversely, suppose that (\ref{limit2}) and (\ref{Taubeq}) hold. In view of (\ref{Seq2}), we have that
 \begin{align*}
\lim_{|h|\to\infty}c^{\psi}_{k,n}(f_h)&=e^{-2\pi i \alpha \beta k\cdot n}\lim_{|x|\to\infty}e^{2\pi i \beta n \cdot x}\frac{V_{\psi}(x,\beta n)}{c(x-k\alpha)}=e^{-2\pi i \alpha \beta k\cdot n}a_n\lim_{|x|\to\infty}\frac{c(x)}{c(x-k\alpha)}
\\
&
=e^{\alpha k\cdot b-2\pi i \alpha \beta k\cdot n}a_n, \end{align*}
which shows that the net $\{f_{h}\}_{h\in\mathbb{R}^{d}}$ satisfies the hypothesis (\ref{equation112}) from Corollary \ref{corollaryGF2}. On the other hand, the estimates (\ref{weightestimate}) and (\ref{Taubeq}) imply that
$$
\sup_{ h\in\mathbb{R}^{d}} \sup_{ (k,n)\in\mathbb{Z}^{2d}}\frac{|c_{k,n}^{\psi}(f_{h})|}{e^{\tau'|k|}(1+|n|)^{\tau'}}<\infty,
$$
with $\tau'=\max\{\tau,r\alpha\}$. Corollary \ref{corollaryGF2} then yields the result.
\end{proof}

In the rest of this section we focus on one-dimensional distributions. In this case it makes sense to consider the $S$-asymptotic behavior for $h\to\infty$, that is, 
\begin{equation}
\label{Seq3.1}f(x+h)\sim c(h)g(x)  \  \  \ \mbox{ in } \mathcal{K}'_{1}(\mathbb{R})\  \mbox{ as }h\to\infty,
\end{equation}
where $c:\mathbb{R}\to(0,\infty)$. A version of Theorem \ref{tttGF1} then applies to characterize (\ref{Seq3.1}); indeed, one just needs to replace $|x|\to\infty$ by $x\to\infty$ in (\ref{limit2}) to obtain the desired characterization. Since $g(x)=C e^{bx}$, one can write \cite{PSV} $c(h)=e^{bh}L(e^{h})$, where $L$ is a Karamata slowly varying function (at infinity), i.e., one that satisfies
\begin{equation*}
\lim_{\lambda\rightarrow\infty}\frac{L(a\lambda)}{L(\lambda)}=1\ \ \text{for each}\ a>0.
\end{equation*}
As an application, we deduce from Theorem \ref{tttGF1} the ensuing Tauberian theorem for the STFT of non-decreasing functions.

\begin{theorem}\label{ttGF2} Let $f$ be a positive non-decreasing function on $[0,\infty)$, $let$ $L$ be a slowly varying function, and
let $G(\psi,\alpha,\beta)$ be a Gabor frame with nonnegative window $\psi\in \mathcal{K}_{1}(\mathbb{R})$.
 Suppose that the limits
\begin{equation}
\label{eq01}\lim_{x\to\infty}\frac{e^{2\pi i n x}}{e^{b x}L(e^{x})} \int _{0}^{\infty} f(t)\psi(t-x)e^{-2\pi i nt} \ dt=: a_n
\end{equation} exist for all $n\in\mathbb{Z}$. Then,
\begin{equation}
\label{eq02}
\lim_{x\to\infty} \frac{f(x)}{e^{b x}L(e^{x})}= \frac{a_0}{\int_{-\infty}^{\infty}\psi(t)e^{b t} dt}\ .
\end{equation}
\end{theorem}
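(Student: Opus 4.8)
The plan is to deduce the statement from the Tauberian Theorem \ref{tttGF1}, which will yield the \emph{distributional} $S$-asymptotic behavior of $f$, and then to convert that into the pointwise limit (\ref{eq02}) by a classical monotonicity (Tauberian) argument. First I would record that the comparison function $c(x)=e^{bx}L(e^{x})$ is locally bounded and satisfies (\ref{Seq2}): since $c(x+h)/c(h)=e^{bx}\,L(e^{x}e^{h})/L(e^{h})\to e^{bx}$ by the (locally uniform) slow variation of $L$, the hypotheses of Theorem \ref{tttGF1} make sense. Hypothesis (\ref{eq01}) is precisely the limit condition (\ref{limit2}), so the only thing left to verify before applying Theorem \ref{tttGF1} is the boundedness condition (\ref{Taubeq}). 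Here the sign assumptions do all the work: because $f\geq 0$ and $\psi\geq 0$,
\begin{equation*}
|V_{\psi}f(x,\beta n)|=\left|\int_{0}^{\infty}f(t)\psi(t-x)e^{-2\pi i\beta n t}\,dt\right|\leq \int_{0}^{\infty}f(t)\psi(t-x)\,dt=V_{\psi}f(x,0),
\end{equation*}
so $|V_{\psi}f(x,\beta n)|/c(x)\leq V_{\psi}f(x,0)/c(x)$ for every $n$, and the right-hand side tends to $a_{0}$ by the $n=0$ case of (\ref{eq01}); hence it is bounded and (\ref{Taubeq}) holds with $\tau=0$. (The same inequality, read against an interval on which $\psi$ stays positive, together with the monotonicity of $f$, also shows that $f=O(c)$ is of exponential type, so $f\in\mathcal{K}_{1}'(\mathbb{R})$ and Theorem \ref{tttGF1} applies.) Theorem \ref{tttGF1} then gives $f(x+h)\sim c(h)\,Ce^{bx}$ in $\mathcal{K}_{1}'(\mathbb{R})$, and (\ref{eqconstants}) at $n=0$ identifies the constant as $C=a_{0}/\overline{\widehat{\psi}(ib/(2\pi))}=a_{0}/\int_{-\infty}^{\infty}\psi(t)e^{bt}\,dt$, the last equality because $\widehat{\psi}(ib/(2\pi))=\int\psi(t)e^{bt}\,dt$ is real. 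Thus the desired limit in (\ref{eq02}) is exactly this $C$.

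Next I would run the monotonicity argument that upgrades this distributional convergence to the pointwise asymptotics $f(x)/c(x)\to C$. Fix $\delta>0$ and choose a nonnegative $\phi\in\mathcal{D}(\mathbb{R})\subset\mathcal{K}_{1}(\mathbb{R})$ supported in $[0,\delta]$ with $\int\phi=1$. Testing the $S$-asymptotics against $\phi$ gives $c(h)^{-1}\int_{0}^{\delta}f(t+h)\phi(t)\,dt\to C\int_{0}^{\delta}e^{bt}\phi(t)\,dt$. Since $f$ is non-decreasing, $f(h)\leq f(t+h)\leq f(h+\delta)$ for $t\in[0,\delta]$, whence
\begin{equation*}
\frac{f(h)}{c(h)}\leq \frac{1}{c(h)}\int_{0}^{\delta}f(t+h)\phi(t)\,dt\leq \frac{f(h+\delta)}{c(h)}.
\end{equation*}
Taking the upper limit in the left inequality yields $\limsup_{h\to\infty}f(h)/c(h)\leq C\int_{0}^{\delta}e^{bt}\phi(t)\,dt$; taking the lower limit in the right inequality and writing $f(h+\delta)/c(h)=\big(f(h+\delta)/c(h+\delta)\big)\big(c(h+\delta)/c(h)\big)$ with $c(h+\delta)/c(h)\to e^{b\delta}$ from (\ref{Seq2}) yields $\liminf_{h\to\infty}f(h)/c(h)\geq e^{-b\delta}C\int_{0}^{\delta}e^{bt}\phi(t)\,dt$. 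Letting $\delta\to0$, so that $\int_{0}^{\delta}e^{bt}\phi(t)\,dt\to1$ and $e^{-b\delta}\to1$, squeezes the two one-sided limits together to $C$, which is exactly (\ref{eq02}).

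The hard part is this second step, i.e. the genuine Tauberian passage from convergence in $\mathcal{K}_{1}'(\mathbb{R})$ to an honest limit of $f(x)/c(x)$. The difficulty is that the window $\psi$ is only exponentially decaying and never compactly supported, so one cannot sandwich $\int f(t)\psi(t-x)\,dt$ directly between values of $f$; this is why it is essential to first extract the \emph{full} $S$-asymptotics---valid when tested against \emph{every} element of $\mathcal{K}_{1}(\mathbb{R})$---through Theorem \ref{tttGF1}, and only afterwards to localize with a compactly supported $\phi$, where monotonicity finally bites. The remaining point requiring care is the correct absorption of the slowly varying factor $L(e^{x})$ via (\ref{Seq2}) when comparing $c(h+\delta)$ to $c(h)$; this is exactly what forces the $\liminf$ and $\limsup$ bounds to collapse to the same constant as $\delta\to0$.
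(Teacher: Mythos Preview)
Your proof is correct and follows essentially the same two-step strategy as the paper: verify the hypotheses of Theorem~\ref{tttGF1} to obtain the distributional $S$-asymptotics, then localize with a compactly supported bump and exploit monotonicity to pass to the pointwise limit. The only difference worth noting is in the verification of the Tauberian condition~(\ref{Taubeq}): the paper first establishes $f=O(c)$ via monotonicity and the $n=0$ limit, and then bounds $|V_{\psi}f(x,\beta n)|\leq C_{2}\int_{0}^{\infty}c(t)\psi(t-x)\,dt=O(c(x))$ through the weight estimate~(\ref{weightestimate}); you instead observe directly that $|V_{\psi}f(x,\beta n)|\leq V_{\psi}f(x,0)$ by nonnegativity of $f$ and $\psi$, which is a bit cleaner and avoids invoking~(\ref{weightestimate}) at this stage. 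Your pointwise sandwich argument is also spelled out more fully than the paper's, which only does the $\limsup$ side and leaves the $\liminf$ to ``similarly''.
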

\begin{proof}
First notice that we must have $b\geq 0$. We show $f(t)=O(c(t))$, where $c(t)=e^{bt}L(e^{|t|})$. Set $C_{1}=\int_{0}^{\infty}\psi(t)dt<\infty$. Since $f$ is non-decreasing, we have
$$
f(x)\leq \frac{1}{C_{1}}\int_{0}^{\infty}f(t+x)\psi(t)dt\leq \frac{1}{C_1}\int_{0}^{\infty}f(t)\psi(t-x)dt=O(c(x)),
$$
because of (\ref{eq01}) with $n=0$. Making use of (\ref{weightestimate}), we conclude that
$$
|V_{\psi}f(x,\beta n)|\leq C_{2}\int_{0}^{\infty}c(t)\psi(t-x)dt\leq C_{3}c(x)\int_{-\infty}^{\infty}e^{r|t|}\psi(t)dt< O(c(x)).
$$
Theorem \ref{tttGF1} implies that $f(x+h)\sim c(h) C e^{bx}$ in $\mathcal{K}'_{1}(\mathbb{R})$ as $h\to\infty$, where $C$ is given by the limit (\ref{eq02}). The rest of the proof goes along the same lines as that of \cite[Thm.~6.5]{KPSV}. Choose a non-negative test function $\varphi\in\mathcal{D}(\mathbb{R})$ such that $\operatorname*{supp}\varphi\subseteq(0,\varepsilon)$ and $\int_{0}^{\varepsilon}\varphi(t)\mathrm{d}t=1$. Using the fact that $f$ is non-decreasing on $(0,\infty)$ and (\ref{Seq3}), we obtain
\begin{equation*}
\limsup_{h\to\infty}\frac{f(h)}{c(h)}\leq\lim_{h\to\infty}\frac{1}{c(h)}\int_{0}^{\varepsilon}f(t+h)\varphi(t)dt
=C \int_{0}^{\varepsilon}e^{b t}\varphi(t)dt \leq C e^{b \varepsilon};
\end{equation*}
taking $\varepsilon\to0^{+}$, we have shown that $\limsup_{h\to\infty}f(h)/c(h)\leq C$. Similarly, one obtains $\liminf_{h\to\infty}f(h)/c(h)\geq C$.
\end{proof}

Theorem \ref{tttGF1} can be strengthened if additionally the window $\psi$ is a Wiener type kernel. In the next theorem we make use of the Pilipovi\'{c}-Stankovi\'{c} distributional version of the Wiener Tauberian theorem \cite{p-sWiener, PSV}.

\begin{theorem} \label{ttGF3} Let $G(\psi,\alpha,\beta)$ be a Gabor frame with window $\psi\in \mathcal{K}_{1}(\mathbb{R})$ and let $c(h)=e^{bh}L(e^{|h|})$, where $L$ is a locally bounded slowly varying function. Assume that
\begin{equation}
\label{eqWienercond}
\widehat{\psi}\left(\xi+\frac{ib}{2\pi}\right)=\int_{-\infty}^{\infty}\psi(t)e^{bt}e^{-2\pi i \xi t}dt\neq 0, \quad \mbox{for all }\xi\in\mathbb{R}.
\end{equation}
If
\begin{equation*}
 \lim_{x\to \infty }\frac{V_{\psi}f(x,0)}{e^{bx}L(e^{x})}= \lim_{x\to \infty }\frac{(f\ast \check{\psi})(x)}{e^{bx}L(e^{x})}= a_0\in\mathbb{C}\quad \mbox{exists}
 \end{equation*}
and the Tauberian condition $(\ref{Taubeq})$ holds for some $\tau$ and $c$ extended as $c(x)=e^{-\tau x}$ for $x\leq 0$, then $f$ has S-asymptotic behavior $(\ref{Seq3.1})$ with $g(x)=C e^{b x}$, where
\begin{equation*}
C= \frac{a_0}{\int_{-\infty}^{\infty}\psi(t)e^{b t} dt}\ .
\end{equation*}
\end{theorem}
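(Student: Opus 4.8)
The plan is to deduce the full $S$-asymptotic behavior from the single convergence at $n=0$ by invoking the Pilipovi\'{c}--Stankovi\'{c} distributional version of the Wiener Tauberian theorem \cite{p-sWiener,PSV}, the Tauberian hypothesis being supplied by (\ref{Taubeq}) together with (\ref{weightestimate}). As a preliminary, I would note that $f\in\mathcal{K}_1'(\mathbb{R})$: evaluating (\ref{Taubeq}) at $x=\alpha k$ and using (\ref{weightestimate}) with $h=0$ gives $|c_{k,n}^{\psi}(f)|\leq C e^{r\alpha|k|}(1+|n|)^{\tau}$, so $f$ is a distribution of exponential type by Corollary \ref{corollaryGF1}(i).

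The first genuine step is to establish the \emph{Tauberian boundedness} of the net $f_{h}=c(h)^{-1}T_{-h}f$ in $\mathcal{K}_1'(\mathbb{R})$. Exactly as in the proof of Theorem \ref{tttGF1}, one has $c_{k,n}^{\psi}(f_{h})=e^{2\pi i\beta n h}V_{\psi}f(\alpha k+h,\beta n)/c(h)$, and combining (\ref{Taubeq}) with (\ref{weightestimate}) (for $c$ in its stated extension to $x\leq 0$) yields
\[
\sup_{h}\ \sup_{(k,n)\in\mathbb{Z}^{2}}\frac{|c_{k,n}^{\psi}(f_{h})|}{e^{\tau'|k|}(1+|n|)^{\tau'}}<\infty,\qquad \tau'=\max\{\tau,r\alpha\},
\]
so $\{f_{h}\}$ is a bounded subset of $\mathcal{K}_1'(\mathbb{R})$ by Corollary \ref{corollaryGF1}(i). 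The limit hypothesis is now nothing but the normalized convergence of a single convolution average, namely $(f\ast\check{\psi})(h)/c(h)=\langle f_{h},\overline{\psi}\rangle\to a_{0}$.

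The heart of the argument is that the non-vanishing condition (\ref{eqWienercond}) is precisely the Wiener hypothesis for the kernel $\check{\psi}$. Since $\mathcal{K}_1'(\mathbb{R})$ is a Silva space, hence Montel, the bounded net $\{f_{h}\}$ is relatively compact, so it suffices to show that all its limit points coincide. If $f_{h_{j}}\to G$ along some $h_{j}\to\infty$, then for every $y$ one has $f_{h_{j}+y}=\big(c(h_{j})/c(h_{j}+y)\big)\,T_{-y}f_{h_{j}}\to e^{-by}T_{-y}G$ by (\ref{Seq2}); pairing with $\overline{\psi}$ and using that $\langle f_{h_{j}+y},\overline{\psi}\rangle\to a_{0}$ forces the convolution equation $(G\ast\check{\psi})(y)=a_{0}e^{by}$ for all $y$. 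The pure exponentials $e^{(b-2\pi i\xi)x}$ solve the homogeneous equation $H\ast\check{\psi}=0$ exactly when $\widehat{\psi}(\xi+ib/(2\pi))=0$, so (\ref{eqWienercond}) excludes them; the Wiener Tauberian theorem upgrades this to uniqueness of the solution $G=Ce^{b\,\cdot}$ within the relevant growth class. Hence every limit point equals $Ce^{b\,\cdot}$, the net converges, and $f(x+h)\sim c(h)\,Ce^{bx}$ in $\mathcal{K}_1'(\mathbb{R})$ as $h\to\infty$, i.e.\ (\ref{Seq3.1}) holds. The value of $C$ is then read off from the $n=0$ instance of (\ref{eqconstants}) in Theorem \ref{tttGF1} (applicable once (\ref{Seq3.1}) is known): $a_{0}=C\,\overline{\widehat{\psi}(ib/(2\pi))}=C\int_{-\infty}^{\infty}\psi(t)e^{bt}\,dt$ (the integral being real), which gives the asserted constant.

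I expect the Wiener Tauberian step to be the main obstacle: one must verify that the boundedness coming from (\ref{Taubeq}) is exactly the Tauberian condition required, and that (\ref{eqWienercond}) is the correct spectral non-vanishing condition for $\check{\psi}$. The real depth lies in passing from the absence of exponential homogeneous solutions to the uniqueness of $G=Ce^{b\,\cdot}$ among all limit points of the bounded family, which is where the distributional Wiener theorem \cite{p-sWiener,PSV} is indispensable; the remaining points (the membership $f\in\mathcal{K}_1'$, compactness via the Montel property, and the extraction of $C$) are routine given the tools of Section \ref{owet} and Theorem \ref{tttGF1}.
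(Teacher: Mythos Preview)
Your proposal is correct and follows essentially the same approach as the paper: you show that the Tauberian condition (\ref{Taubeq}) amounts to boundedness of the net $f_{h}=c(h)^{-1}T_{-h}f$ in $\mathcal{K}'_1(\mathbb{R})$ (the paper simply says this follows by inspection of the proof of Theorem~\ref{tttGF1}) and then conclude via the Pilipovi\'{c}--Stankovi\'{c} distributional Wiener Tauberian theorem. The only difference is that you unpack the Wiener step (Montel compactness, convolution equation for limit points, uniqueness from (\ref{eqWienercond})), whereas the paper cites \cite[Prop.~4.8, p.~206]{PSV} directly and leaves the extraction of $C$ implicit.
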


\begin{proof} Inspection in the proof of Theorem \ref{tttGF1} shows that the Tauberian condition (\ref{Taubeq}) is equivalent to the fact that the net (\ref{eq net translation}) is bounded. The $S$-asymptotic behavior (\ref{Seq3}) now follows directly by applying \cite[Prop. 4.8, p. 206]{PSV}.
\end{proof}

\begin{example}[The Gaussian window] Let $\beta$ be any positive number. If $\psi(x)=e^{-\pi x^2}$ and we choose any $\alpha$ with the property $\alpha \beta<1$, then $G(\psi,\alpha,\beta)$ is a Gabor frame for $L^{2}(\mathbb{R})$. Since, $\hat{\psi}(\xi)=e^{-\pi \xi^{2}}$, the Wiener type condition (\ref{eqWienercond}) is satisfied for all $b\in \mathbb{R}$. In this special case, Theorem \ref{ttGF3} yields
$$f(x+h)\sim a_0e^{-\frac{b^{2}}{4\pi}}e^{bh}L(e^{h})e^{bx} \  \  \ \mbox{ in } \mathcal{K}'_{1}(\mathbb{R})\  \mbox{ as }h\to\infty
.$$
Furthermore, one can also derive a form of the Wiener Tauberian theorem for the Gaussian kernel from Theorem \ref{ttGF3}. \emph{Let $f$ be non-decreasing on $[0,\infty)$ and let $L$ be slowly varying. If}
$$\lim_{x\to\infty}\frac{1}{e^{b x}L(e^{x})} \int _{0}^{\infty} f(t)e^{-\pi(t-x)^{2}} \ dt=a_0
,$$
\emph{then,} $f(x)\sim a_0e^{-\frac{b^{2}}{4\pi}}L(e^{x})e^{bx}$ \emph{as} $x\to\infty$. The proof of this assertion is exactly the same as that of Theorem \ref{ttGF2}, but employing Theorem \ref{ttGF3} instead of Theorem \ref{tttGF1}.
\end{example}

\smallskip

We end this article with a remark.

\begin{remark}[The case of tempered distributions]  If  $c(h)\sim|h|^{\nu}L(|h|)$ where $L$ is slowly varying and (\ref{Seq1}) is satisfied, then $f\in\mathcal{S}'(\mathbb{R}^{d})$ and actually (\ref{Seq1}) holds in the (strong) topology of $\mathcal{S}'(\mathbb{R}^{d})$ (this follows from \cite[Thm.~1.9, p.~44]{PSV} and the remarks in \cite[p. 45]{PSV}). Hence, as a consequence of the second part of Corollary \ref{corollaryGF2}, in this case we may use Gabor windows $\psi\in\mathcal{S}(\mathbb{R}^{d})$ in Theorem \ref{tttGF1}, Theorem \ref{ttGF2}, and Theorem \ref{ttGF3}.  \end{remark}

\end{document}